\newtheorem{theorem}{Theorem}
\newtheorem*{theorem*}{Theorem}
\newtheorem*{lemma*}{Lemma}
\newtheorem{corollary}{Corollary}
\newtheorem*{corollary*}{Corollary}
\newtheorem{proposition}{Proposition}
\newtheorem*{proposition*}{Proposition}
\theoremstyle{definition}
\newtheorem{definition}{Definition}
\newtheorem*{definition*}{Definition}
\newtheorem{remark}{Remark}
\newtheorem*{remark*}{Remark}
\newtheorem{example}{Example}
\newtheorem*{example*}{Example}
\begin{document}
\title[MZVs and iterated log-sine integrals]{multiple zeta values and iterated log-sine integrals}
\author{RYOTA UMEZAWA}
\subjclass[2010]{Primary 11M41, Secondary 33E20}
\keywords{Multiple zeta values, log-sine integrals, multiple polylogarithms}
\date{}
\maketitle
\begin{abstract}
We introduce an iterated integral version of (generalized) log-sine integrals (iterated log-sine integrals) and prove a relation between a multiple polylogarithm and iterated log-sine integrals. We also give a new method for obtaining relations among multiple zeta values, which uses iterated log-sine integrals, and give alternative proofs of several known results related to multiple zeta values and log-sine integrals.
\end{abstract}
\section{Introduction}
Multiple zeta values (MZVs) are defined by
\[\zeta(k_{1},\dots,k_{n}) = \sum_{0 < m_1  < \dots < m_n} \frac{1}{m_{1}^{k_{1}} \cdots m_{n}^{k_{n}}},\]
where $k_{1},\dots,k_{n}$ are positive integers with $k_{n} \ge 2$. For $n=1$, these are called Riemann zeta values.

On the other hand, (generalized) log-sine integrals are defined by
\[{\rm Ls}_{k}^{(l)}(\sigma)=-\int_{0}^{\sigma}\theta^{l}\log^{k-1-l}\left|2\sin\frac{\theta}{2}\right|\,d\theta,\]
where $k\in\mathbb{N}$ and $l \in \mathbb{Z}_{\ge 0}$. For $l=0$, these are called (basic) log-sine integrals.
There are studies on relations between Riemann zeta values and (generalized) log-sine integrals, for example \cite{CCS}, \cite{L} and \cite{ZW}. Moreover, there are also studies on relations between multiple zeta values and (generalized) log-sine integrals, for example \cite{BS}. 

In this paper, we give a new method for obtaining relations among multiple zeta values using log-sine integrals (an outline is given in Section \ref{se:overview}). For this purpose, we need to introduce an iterated integral version of (generalized) log-sine integrals (iterated log-sine integrals):
\begin{definition}[Iterated log-sine integrals]
Let $A(\theta) = \log\left|2\sin(\theta/2)\right|$.
For $\sigma \in \mathbb{R}_{\ge 0}$, $\mathbf{k} = (k_{1}, \dots, k_{n}) \in \mathbb{N}^{n}$ and $\mathbf{l} = (l_{1},\dots,l_{n}) \in \mathbb{Z}_{\ge 0}^{n}$, we define
\[{\rm Ls}_{\mathbf{k}}^{\mathbf{l}}(\sigma)=(-1)^{n}\int_{0<\theta_{1}<\dots<\theta_{n}<\sigma}\prod_{u=1}^{n}\theta_u^{l_u}A^{k_u-1-l_u}(\theta_{u})\,d\theta_u.\]
\end{definition}
This integral converges absolutely when $k_{u} \ge 1$ and $l_{u} \ge 0$ for any $u \in \{1,\dots,n\}$.
In Section \ref{se:Pre}, we prove several properties related to iterated log-sine integrals. In particular, Theorem \ref{pr:prod} and Theorem \ref{th:LitoLs} in Section \ref{se:Pre} play an important role in our method.
In Section \ref{se:zeta}, we give some examples, a result obtained by using computer and alternative proofs of several known results.
For example, by using only our method, we can give another proof of Euler's result:
 \[\zeta(2k)=(-1)^{k+1}\frac{(2\pi)^{2k}B_{2k}}{2(2k)!}.\]
In Section \ref{se:algo}, we give our algorithm for computer calculations.
\section{Outline of a new method}\label{se:overview}
In this section, we give an outline of a new method for obtaining relations among multiple zeta values. 
For $\mathbf{k}=(k_{1},\dots,k_{n}) \in \mathbb{N}^n$, $|\mathbf{k}|=k_{1}+\dots+k_{n}$ is called the weight of $\mathbf{k}$ and $n$ is called the depth of $\mathbf{k}$.
Let $\{k\}^{m}$ denotes $m$ repetitions of $k$. When $k_{n}>1$, we write $\mathbf{k}$ in the form
\begin{align*}
\mathbf{k}=(\{1\}^{a_{1}-1},b_{1}+1,\{1\}^{a_{2}-1},b_{2}+1,\dots,\{1\}^{a_{h}-1},b_{h}+1),
\end{align*}
then its dual index $\mathbf{k}^{*}$ is defined by
\[\mathbf{k}^{*}=(\{1\}^{b_{h}-1},a_{h}+1,\dots,\{1\}^{b_{2}-1},a_{2}+1,\{1\}^{b_{1}-1},a_{1}+1).\]
We define $\mathbf{k}^{(0)}=\mathbf{k}$,
\begin{align*}
\mathbf{k}^{(1)}=&
  \begin{cases}
    (k_{1},\dots,k_{n-1},k_{n}-1)&\quad {\rm if}\ k_{n}>1,\\
    (k_{1},\dots,k_{n-1})&\quad {\rm if}\ k_{n}=1,\\
    \phi&\quad {\rm if}\ n=1, k_{n}=1,
  \end{cases}
\end{align*}
and $\mathbf{k}^{(n)}=(\mathbf{k}^{(n-1)})^{(1)}\ (n>1)$.

Then the result by J. M. Borwein, D. J. Broadhurst and J. Kamnitzer \cite[theorem 4.4]{BBK} can be rewritten as follows.
\begin{theorem}[{Borwein, Broadhurst and Kamnitzer \cite[theorem 4.4]{BBK}
}]\label{th:BBK}
For $\mathbf{k}=(k_{1},\dots,k_{n}) \in \mathbb{N}^{n}$ with $k_{n}>1$, we have
\begin{align}\zeta(\mathbf{k})=\sum_{m=0}^{|\mathbf{k}|}\mathrm{Li}_{\mathbf{k}^{(m)}}(e^{\frac{\pi}{3}i})\overline{\mathrm{Li}_{(\mathbf{k}^{*})^{(|\mathbf{k}|-m)}}(e^{\frac{\pi}{3}i})}.\label{eq:BBK}
\end{align}
\end{theorem}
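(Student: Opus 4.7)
The starting point is the Kontsevich-style iterated-integral representation
\[\zeta(\mathbf{k}) = \int_{0}^{1} \omega_{1}\omega_{0}^{k_{1}-1}\omega_{1}\omega_{0}^{k_{2}-1}\cdots\omega_{1}\omega_{0}^{k_{n}-1},\]
where $\omega_{0} = dt/t$, $\omega_{1} = dt/(1-t)$, and the leftmost letter of the word sits at the smallest value of the integration variable. The equilateral triangle with vertices $0$, $e^{\pi i/3}$, $1$ avoids the singular set $\{0,1\}$ of these forms, so the segment $[0,1]$ may be replaced by the broken path $[0,e^{\pi i/3}] \cup [e^{\pi i/3},1]$ without altering the integral. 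The standard path-concatenation identity for iterated integrals then splits $\zeta(\mathbf{k})$ into a sum over cut points $j \in \{0,1,\dots,|\mathbf{k}|\}$ of the integral of the first $j$ letters along $[0,e^{\pi i/3}]$ times the integral of the remaining $|\mathbf{k}|-j$ letters along $[e^{\pi i/3},1]$. The plan is to reindex by $m = |\mathbf{k}|-j$ and identify each of the two factors with the ingredients appearing in \eqref{eq:BBK}.

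For the first factor, unwinding the recursive definition of $\mathbf{k}^{(m)}$ shows that removing the final $m$ letters of the word for $\mathbf{k}$ produces exactly the word for $\mathbf{k}^{(m)}$; with the convention $\mathrm{Li}_{\phi} \equiv 1$ for the empty index, the first factor is therefore $\mathrm{Li}_{\mathbf{k}^{(m)}}(e^{\pi i/3})$. For the second factor, I would perform the change of variable $t \mapsto 1-t$. This maps $e^{\pi i/3}$ to $e^{-\pi i/3}$ (using $1-e^{\pi i/3}=e^{-\pi i/3}$), interchanges the forms via $\omega_{0} \mapsto -\omega_{1}$ and $\omega_{1} \mapsto -\omega_{0}$, and reverses the path direction; reversing direction a second time to run from $0$ to $e^{-\pi i/3}$ contributes a second factor of $(-1)^{m}$ together with a reversal of the word, so the two signs cancel. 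The net effect is that the second factor equals the integral from $0$ to $e^{-\pi i/3}$ of the word obtained from the last $m$ letters of $W_{\mathbf{k}}$ by reversing their order and swapping $\omega_{0} \leftrightarrow \omega_{1}$.

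The crux is then the combinatorial statement that performing this reverse-and-swap operation on the final $m$ letters of $W_{\mathbf{k}}$ yields the initial $m$ letters of $W_{\mathbf{k}^{*}}$, hence the word $W_{(\mathbf{k}^{*})^{(|\mathbf{k}|-m)}}$. This follows directly from the block decomposition $\mathbf{k} = (\{1\}^{a_{1}-1}, b_{1}+1, \dots, \{1\}^{a_{h}-1}, b_{h}+1)$ used in the definition of duality: in $\omega_{0},\omega_{1}$ language the word of $\mathbf{k}$ is $\omega_{1}^{a_{1}}\omega_{0}^{b_{1}}\cdots\omega_{1}^{a_{h}}\omega_{0}^{b_{h}}$, and the word of $\mathbf{k}^{*}$ is its global reverse-and-swap. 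Once this dictionary is established, the second factor becomes $\int_{0}^{e^{-\pi i/3}} W_{(\mathbf{k}^{*})^{(|\mathbf{k}|-m)}}$, and because the forms have real coefficients this integral equals $\overline{\mathrm{Li}_{(\mathbf{k}^{*})^{(|\mathbf{k}|-m)}}(e^{\pi i/3})}$. Substituting both factors into the sum recovers \eqref{eq:BBK}.

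The main obstacle I anticipate is precisely this combinatorial identification: one must verify that reverse-and-swap commutes with truncation at any position, including cuts that land inside a run of consecutive $\omega_{0}$'s or $\omega_{1}$'s, and that the boundary cases $m=0$ and $m=|\mathbf{k}|$ are handled correctly by the conventions $\mathbf{k}^{(0)}=\mathbf{k}$ and $\mathbf{k}^{(|\mathbf{k}|)}=\phi$ already built into the definition. Everything else---holomorphic contour deformation inside the equilateral triangle, concatenation of iterated integrals, the $t \mapsto 1-t$ substitution with its sign bookkeeping, and the complex-conjugation identity for real-coefficient iterated integrals---is routine once the word/index dictionary is in place.
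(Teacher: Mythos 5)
The paper does not prove this statement at all: Theorem \ref{th:BBK} is quoted from Borwein--Broadhurst--Kamnitzer \cite[Theorem 4.4]{BBK} and is used as a black box, so there is no in-paper proof to compare against. Your argument is essentially the original proof from \cite{BBK}: represent $\zeta(\mathbf{k})$ as the iterated integral of the word $\omega_1\omega_0^{k_1-1}\cdots\omega_1\omega_0^{k_n-1}$ over $[0,1]$, deform the contour to $[0,e^{\pi i/3}]\cup[e^{\pi i/3},1]$, apply path concatenation, and convert the second factor via $t\mapsto 1-t$ using the special properties $1-e^{\pi i/3}=e^{-\pi i/3}=\overline{e^{\pi i/3}}$. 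The sign bookkeeping is right (the $(-1)^{m}$ from swapping $\omega_0\leftrightarrow\omega_1$ cancels against the $(-1)^{m}$ from path reversal), the identification of prefixes of $W_{\mathbf{k}}$ with $\mathbf{k}^{(m)}$ and of reverse-and-swapped suffixes with prefixes of $W_{\mathbf{k}^*}$ is correct and does commute with cuts inside runs of equal letters, and the conjugation step is justified because the forms have real coefficients and the path to $e^{-\pi i/3}$ is the conjugate path. Two points deserve explicit mention in a written-up version: first, every factor produced by the cut converges because each prefix of $W_{\mathbf{k}}$ begins with $\omega_1$ (regular at $0$) and each suffix ends with $\omega_0$ (regular at $1$, using $k_n>1$), and the cut point $e^{\pi i/3}$ is away from both singularities; second, the contour deformation requires homotopy invariance of the iterated integral on $\mathbb{C}\setminus\{0,1\}$ with endpoints at the singular points, which is standard but should be invoked via a limiting argument from slightly shrunken paths. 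With those remarks, your proof is complete and matches the approach of the cited source.
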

Here, $\mathrm{Li}_{\mathbf{k}}(z)$ is the multiple polylogarithm defined by
\[\mathrm{Li}_{\mathbf{k}}(z) = \sum_{0<m_{1} < \dots < m_{n}} \frac{z^{m_{n}}}{m_{1}^{k_{1}} \cdots m_{n}^{k_{n}}} \quad \mathrm{and}\quad \mathrm{Li}_{\phi}(z) =1\]
for $|z| < 1$, and can be continued holomorphically to  $\mathbb{C}\setminus[1,\infty)$.
Theorem \ref{th:BBK} was proved as a duality among multiple Clausen values. However , we use it as a relation between a multiple zeta value and iterated log-sine integrals.

In Section \ref{se:Pre}, we prove the following two facts, which play an important role on our method.
The first is the following relation between a multiple polylogarithm and iterated log-sine integrals as a corollary of Theorem \ref{th:LitoLs}.
\begin{corollary}\label{th:LitoLspi/3}For $\mathbf{k} = (k_{1},\dots,k_{n}) \in \mathbb{N}^{n}$, we have
\begin{align*}
\mathrm{Li}_{\mathbf{k}}(e^{\frac{\pi}{3}i})=i^n\int_{0<\theta_{1}<\dots<\theta_{n}<\theta_{n+1} = \frac{\pi}{3}}\prod_{u=1}^{n} \frac{\left(A(\theta_{u+1}) - A(\theta_{u}) - \frac{i\theta_{u+1}}{2} + \frac{i\theta_{u}}{2}\right)^{k_{u}-1}}{(k_{u}-1)!}\,d\theta_{u},
\end{align*}
where $A(\theta) = \log\left|2\sin(\theta/2)\right|$.
\end{corollary}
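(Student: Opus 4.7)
The plan is to deduce this from Theorem \ref{th:LitoLs} by specializing the endpoint to $\pi/3$; equivalently, it follows directly from the Chen iterated-integral representation
\[
\mathrm{Li}_{\mathbf{k}}(z) = \int_{\gamma}\omega_1\,\omega_0^{k_1-1}\,\omega_1\,\omega_0^{k_2-1}\cdots\omega_1\,\omega_0^{k_n-1},
\]
along a path $\gamma$ from $0$ to $z = e^{i\pi/3}$, where $\omega_0 = dt/t$, $\omega_1 = dt/(1-t)$, and the 1-forms are read in order of increasing integration variable from left to right.

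The key move is to parametrize $\gamma$ by $t = 1 - e^{-i\theta}$ for $\theta \in [0, \pi/3]$. A short computation using $\log t = A(\theta) + i(\pi - \theta)/2$ on this arc gives
\[
\omega_1 = i\,d\theta, \qquad \omega_0 = d\bigl(A(\theta) - i\theta/2\bigr).
\]
Label the variables of the $n$ occurrences of $\omega_1$ by $\theta_1 < \cdots < \theta_n$ and set $\theta_{n+1} = \pi/3$. The $k_u - 1$ copies of $\omega_0$ forming the $u$-th block then live on the simplex $\theta_u < s_1 < \cdots < s_{k_u-1} < \theta_{u+1}$; since $\omega_0$ is exact on this parametrization, the inner iterated integral collapses to
\[
\frac{\bigl(A(\theta_{u+1}) - A(\theta_u) - i\theta_{u+1}/2 + i\theta_u/2\bigr)^{k_u-1}}{(k_u - 1)!}.
\]
Taking the product over $u$, absorbing the $n$ factors of $i$ from the $\omega_1$'s into $i^n$, and leaving the outer simplex integration in $(\theta_1, \dots, \theta_n)$, one recovers the displayed formula.

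The main thing to check carefully is the parametrization: the path $t = 1 - e^{-i\theta}$ is an arc of the circle $|1-t|=1$ joining $0$ to $e^{i\pi/3}$ and lies in the domain $\mathbb{C}\setminus[1,\infty)$ of holomorphy of $\mathrm{Li}_{\mathbf{k}}$, so the Chen integral along $\gamma$ indeed computes $\mathrm{Li}_{\mathbf{k}}(e^{i\pi/3})$. One must also verify that the branch of $\log t$ chosen matches the standard branch implicit in the iterated-integral representation and check convergence at $\theta_1 \to 0^+$; the latter is automatic because the integrand has only logarithmic growth there, which is integrable in the innermost variable.
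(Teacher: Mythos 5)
Your proposal is correct and follows essentially the same route as the paper: the paper proves Theorem \ref{th:LitoLs} by starting from the (already block-collapsed) iterated-integral representation $\mathrm{Li}_{\mathbf{k}}(t_{n+1})=\int_{\mathbf{C}}\prod_{u}\frac{1}{(k_u-1)!}\bigl(\int_{t_u}^{t_{u+1}}\frac{dt}{t}\bigr)^{k_u-1}\frac{dt_u}{1-t_u}$, substituting $t_u=1-e^{\pm i\theta_u}$, and using $\log(1-e^{\pm i\theta})=A(\theta)\pm i\frac{\theta-\pi}{2}$, then deduces the corollary from $e^{\pi i/3}=1-e^{-\pi i/3}$. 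Your parametrization, branch computation, and collapsing of the exact $\omega_0$ blocks reproduce exactly this argument in the special case $\sigma=\pi/3$ with the lower sign.
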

By this corollary, we can see that multiple polylogarithms at $e^{\pi i/3}$ can be written as a $\mathbb{Q}(i)$-linear combination of iterated log-sine integrals at $\pi/3$ satisfying $k_{j} -1 - l_{j} \ge 0$ for all $ j \in \{1,\dots,n\}$. The second is the following corollary as a corollary of Theorem \ref{pr:prod}.
\begin{corollary}\label{pr:prodpi/3}
The product of iterated log-sine integrals at $\pi/3$ satisfying $k_{j} -1 - l_{j} \ge 0$ for all $ j \in \{1,\dots,n\}$ can be written as a $\mathbb{Q}$-linear combination of the products of $\pi^m\ (m \ge 0)$ and an iterated log-sine integral at $\pi/3$ satisfying $k_{j} -1 - l_{j} > 0$ for all $j \in \{1,\dots,n\}$.
\end{corollary}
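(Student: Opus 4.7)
The plan is to combine the shuffle-type formula from Theorem \ref{pr:prod} with a termwise integration by parts that removes the positions at which the exponent of $A$ is zero.

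First, by Theorem \ref{pr:prod}, a product of iterated log-sine integrals at $\pi/3$ is a $\mathbb{Q}$-linear combination of single iterated log-sine integrals at $\pi/3$, indexed by shuffles of the two underlying sequences of pairs $(k_u, l_u)$. Because the shuffle only reorders the 1-forms $\theta^{l_u} A^{k_u - 1 - l_u}(\theta)\,d\theta$ without altering them, every resulting integral again satisfies $k_u - 1 - l_u \ge 0$ at every position. It therefore suffices to treat a single $\mathrm{Ls}_{\mathbf{k}}^{\mathbf{l}}(\pi/3)$ with $k_u - 1 - l_u \ge 0$ for all $u$.

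Next, I induct on the number of indices $u$ with $k_u - 1 - l_u = 0$. The base case (no such indices) is immediate. Otherwise choose such a $j$; at position $j$ the integrand is the bare monomial $\theta_j^{l_j}$. By Fubini I integrate first over $\theta_j \in (\theta_{j-1}, \theta_{j+1})$ with the convention $\theta_0 = 0$ and $\theta_{n+1} = \pi/3$, obtaining $\frac{\theta_{j+1}^{l_j+1} - \theta_{j-1}^{l_j+1}}{l_j+1}$ in place of the $\theta_j$ slot. This splits $\mathrm{Ls}_{\mathbf{k}}^{\mathbf{l}}(\pi/3)$ into at most two iterated integrals of depth $n-1$: in the term coming from the $\theta_{j \pm 1}$ endpoint the factor $\theta_{j \pm 1}^{l_j+1}$ is absorbed into the $u = j \pm 1$ component, increasing both $k_{j \pm 1}$ and $l_{j \pm 1}$ by $l_j + 1$ while leaving their difference $k_{j \pm 1} - 1 - l_{j \pm 1}$ unchanged. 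When $j = 1$ the lower endpoint contributes $0$; when $j = n$ the upper endpoint contributes $(\pi/3)^{l_n + 1}$, a rational multiple of $\pi^{l_n + 1}$, times a depth-$(n-1)$ integral whose first $n - 1$ components are untouched.

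In each resulting piece the count of zero-exponent positions has strictly dropped and the surviving exponents are still $\ge 0$, so the inductive hypothesis applies and yields the required expression as a $\mathbb{Q}$-linear combination of $\pi^m \cdot \mathrm{Ls}_{\mathbf{k}'}^{\mathbf{l}'}(\pi/3)$ with $k'_j - 1 - l'_j > 0$ for all $j$. The main thing to watch out for is the bookkeeping in the absorption step, specifically the invariance of $k_{j \pm 1} - 1 - l_{j \pm 1}$; this is what guarantees the induction is monotone and that no new zero-exponent positions are created along the way.
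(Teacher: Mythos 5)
Your proof is correct and follows essentially the same route as the paper: the shuffle decomposition (Proposition \ref{pr:Lsshuffle}) followed by the endpoint-integration reduction at each position with $k_j-1-l_j=0$ (Proposition \ref{pr:j}), with the observation that the $j=n$ case produces the factor $(\pi/3)^{k_n}$, a rational multiple of $\pi^{k_n}$. The only slip is a labelling one: the shuffle formula you invoke is Proposition \ref{pr:Lsshuffle}, not Theorem \ref{pr:prod} (which is the combined statement you are in the course of reproving).
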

By Corollary \ref{th:LitoLspi/3} and Corollary \ref{pr:prodpi/3}, the right hand side of (\ref{eq:BBK}) can be written as a $\mathbb{Q}(i)$-linear combination of the products of $\pi^m\ (m \ge 0)$ and an iterated log-sine integral at $\pi/3$ satisfying $k_{j} -1 - l_{j} > 0$ for all $j \in \{1,\dots,n\}$. After writing the right hand side of (\ref{eq:BBK}) in terms of iterated log-sine integrals, by taking the real part of it, we can obtain the iterated log-sine integral expression of multiple zeta values. In addition, by taking the imaginary part, we can obtain relations among iterated log-sine integrals. By applying these relations to the iterated log-sine integral expression of multiple zeta values, we obtain relations among multiple zeta values. This is the new method introduced in this paper. Concrete examples are given in Section \ref{se:zeta}.
\section{Iterated log-sine integrals}\label{se:Pre}
In this section, we prove several propositions on iterated log-sine integrals.
There is two fundamental propositions on iterated log-sine integrals.
The first proposition is as follows.
\begin{proposition}\label{pr:Lsshuffle}
Let $S_{n}$ be the symmetric group of degree $n$. For $\mathbf{k} = (k_{1}, \dots, k_{n})$, $\mathbf{l} = (l_{1}, \dots, l_{n})$,  $\mathbf{k}' =(k_{n+1}, \dots, k_{n+n'})$, $\mathbf{l}' =(l_{n+1}, \dots, l_{n+n'})$ and $\tau \in S_{n+n'}$,
we define
\begin{align*}\tau(\mathbf{k},\mathbf{k}') &=\tau(k_{1},\dots,k_{n+n'})= (k_{\tau({1})},\dots,k_{\tau({n+n'})}),\\
\tau(\mathbf{l},\mathbf{l}') &=\tau(l_{1},\dots,l_{n+n'})= (l_{\tau({1})},\dots,l_{\tau({n+n'})}),
\end{align*}
and 
\[S_{n,n'} = \{\tau \in S_{n+n'} \mid \tau^{-1}(1)<\dots<\tau^{-1}(n)\ and \ \tau^{-1}(n+1)<\dots<\tau^{-1}(n+n')\}.\]
Then we have
\begin{align}{\rm Ls}_{\mathbf{k}}^{\mathbf{l}}(\sigma)\cdot {\rm Ls}_{\mathbf{k}'}^{\mathbf{l}'}(\sigma) = \sum_{\tau \in S_{n, n'}}{\rm Ls}_{\tau(\mathbf{k},\mathbf{k}')}^{\tau(\mathbf{l},\mathbf{l}')}(\sigma).\label{eq:shuffle}
\end{align}
In particular, the product of two iterated log-sine integrals at $\sigma$ can be written as a $\mathbb{Q}$-linear combination of iterated log-sine integrals at $\sigma$.
Moreover this decomposition satisfies the associative law.
\end{proposition}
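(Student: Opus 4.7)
The plan is to view each iterated log-sine integral as a standard iterated integral on a simplex,
\[
{\rm Ls}_{\mathbf{k}}^{\mathbf{l}}(\sigma) = (-1)^n \int_{0<\theta_1<\cdots<\theta_n<\sigma} \prod_{u=1}^n f_u(\theta_u)\, d\theta_u,\qquad f_u(\theta) := \theta^{l_u} A(\theta)^{k_u-1-l_u},
\]
and then invoke the classical shuffle decomposition of a product of iterated integrals on a simplex. This is the same combinatorial principle that yields the shuffle product for multiple polylogarithms and MZVs.

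First, I would rewrite the left-hand side of (\ref{eq:shuffle}) as a single integral over the product simplex $\Delta_n(\sigma)\times\Delta_{n'}(\sigma)$ in variables $x_1<\cdots<x_n$ and $y_1<\cdots<y_{n'}$, with integrand $\prod_u f_u(x_u)\prod_v g_v(y_v)$ and overall sign $(-1)^{n+n'}$, where $g_v(\theta) = \theta^{l_{n+v}}A(\theta)^{k_{n+v}-1-l_{n+v}}$. Up to the measure-zero locus $\bigcup_{i,j}\{x_i=y_j\}$, this product simplex decomposes into the $\binom{n+n'}{n}$ disjoint regions indexed by shuffles $\tau\in S_{n,n'}$: in the region associated with $\tau$, after sorting the combined coordinates into increasing order $0<\theta_1<\cdots<\theta_{n+n'}<\sigma$, the $x$'s occupy the positions $\tau^{-1}(1)<\cdots<\tau^{-1}(n)$ and the $y$'s occupy the complementary positions $\tau^{-1}(n+1)<\cdots<\tau^{-1}(n+n')$. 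Next I would observe that on this region the integrand becomes $\prod_{j=1}^{n+n'} h_{\tau(j)}(\theta_j)$, where $h_i=f_i$ for $i\le n$ and $h_i=g_{i-n}$ for $i>n$; by construction $h_i(\theta)=\theta^{l_i}A(\theta)^{k_i-1-l_i}$ with the concatenated indices $(\mathbf{k},\mathbf{k}')$ and $(\mathbf{l},\mathbf{l}')$, so integrating over $\Delta_{n+n'}(\sigma)$ and restoring the sign $(-1)^{n+n'}$ produces exactly ${\rm Ls}_{\tau(\mathbf{k},\mathbf{k}')}^{\tau(\mathbf{l},\mathbf{l}')}(\sigma)$. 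Summing over $\tau\in S_{n,n'}$ then gives (\ref{eq:shuffle}).

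The ``in particular'' claim would follow immediately, as the right-hand side of (\ref{eq:shuffle}) is already a $\mathbb{Z}_{\ge 0}$-linear (hence $\mathbb{Q}$-linear) combination of iterated log-sine integrals at $\sigma$. For the associative law, I would note that a triple product ${\rm Ls}_1\cdot{\rm Ls}_2\cdot{\rm Ls}_3$, written as an integral over $\Delta_{n_1}\times\Delta_{n_2}\times\Delta_{n_3}$, decomposes into a single sum indexed by the $(n_1,n_2,n_3)$-shuffles in $S_{n_1+n_2+n_3}$, and applying (\ref{eq:shuffle}) first to the leftmost pair or first to the rightmost pair yields the same indexing set, so the two bracketings agree. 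The main ``obstacle'' is not conceptual but purely a bookkeeping check that the convention $\tau(\mathbf{k},\mathbf{k}')=(k_{\tau(1)},\dots,k_{\tau(n+n')})$ in the statement matches the natural labelling produced by the shuffle decomposition above; once this is pinned down, the rest of the argument is mechanical.
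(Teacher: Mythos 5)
Your proposal is correct and follows essentially the same route as the paper: both write the product as a $(-1)^{n+n'}$-signed integral over the product of two simplices, decompose that domain (up to measure zero) into the $(n,n')$-shuffle regions to get (\ref{eq:shuffle}), and prove associativity by noting that both bracketings of a triple product equal the single sum over $(n,n',n'')$-shuffles of the triple-simplex integral. Your bookkeeping check of the convention $\tau(\mathbf{k},\mathbf{k}')=(k_{\tau(1)},\dots,k_{\tau(n+n')})$ is the only detail the paper leaves implicit, and you resolve it correctly.
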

\begin{proof}
We have
\begin{align*}
&{\rm Ls}_{\mathbf{k}}^{\mathbf{l}}(\sigma)\cdot {\rm Ls}_{\mathbf{k}'}^{\mathbf{l}'}(\sigma) \\
&=(-1)^{n+n'}\int_{\substack{0<\theta_{1}<\dots<\theta_{n}<\sigma \\ 0<\theta_{n+1}<\dots<\theta_{n+n'}<\sigma }}\prod_{u=1}^{n+n' }\theta_{u}^{l_{u}}A^{k_{u}-1-l_{u}}(\theta_{u})\,d\theta_{u}.
\end{align*}
By separating the domain of integration as in the case of the shuffle product of multiple zeta values, we obtain the desired formula.

Let $\mathbf{k}'' = (k_{n+n'+1}, \dots, k_{n+n'+n''})$ and $\mathbf{l}'' = (l_{n+n'+1}, \dots, l_{n+n'+n''})$. Then we obtain that
\[{\rm Ls}_{\mathbf{k}}^{\mathbf{l}}(\sigma) \left( {\rm Ls}_{\mathbf{k}'}^{\mathbf{l}'}(\sigma)\cdot {\rm Ls}_{\mathbf{k}''}^{\mathbf{l}''}(\sigma)\right)\]
and 
\[\left({\rm Ls}_{\mathbf{k}}^{\mathbf{l}}(\sigma) \cdot {\rm Ls}_{\mathbf{k}'}^{\mathbf{l}'}(\sigma) \right) {\rm Ls}_{\mathbf{k}''}^{\mathbf{l}''}(\sigma)\]
are equal to 
\begin{align*}
&(-1)^{n+n'+n''}\int_{\substack{0<\theta_{1}<\dots<\theta_{n}<\sigma \\ 0<\theta_{n+1}<\dots<\theta_{n+n'}<\sigma \\ 0<\theta_{n+n'+1}<\dots<\theta_{n+n'+n''}<\sigma}}\prod_{u=1}^{n+n'+n'' }\theta_{u}^{l_{u}}A^{k_{u}-1-l_{u}}(\theta_{u})\,d\theta_{u}\\
&=\sum_{\tau \in S_{n, n', n''}}{\rm Ls}_{\tau(\mathbf{k},\mathbf{k}', \mathbf{k}'')}^{\tau(\mathbf{l},\mathbf{l}',\mathbf{l}'')}(\sigma),
\end{align*}
where 
\begin{align*}\tau(\mathbf{k},\mathbf{k}',\mathbf{k}'') &= (k_{\tau({1})},\dots,k_{\tau({n+n'+n''})}),\\
\tau(\mathbf{l},\mathbf{l}',\mathbf{l}'') &=(l_{\tau({1})},\dots,l_{\tau({n+n'+n''})}),
\end{align*}
and
\begin{align*}
S_{n,n',n''} = \left\{\tau \in S_{n+n'+n''}\mathrel{}\middle|\mathrel{} \begin{aligned}
&\tau^{-1}(1)<\dots<\tau^{-1}(n),\\ &\tau^{-1}(n+1)<\dots<\tau^{-1}(n+n'),\\
&\tau^{-1}(n+n'+1)<\dots<\tau^{-1}(n+n'+n'')
\end{aligned}\right\}.
\end{align*}
Therefore, we obtain the desired proposition.
\end{proof}
\begin{example}\label{ex:shuffle}
We obtain
\begin{align*}
&{\rm Ls}_{1,3}^{(0,1)}(\sigma)\cdot{\rm Ls}_{2}^{(1)}(\sigma)\\
&=-\int_{\substack{0<\theta_{1,1}<\theta_{1,2}<\sigma \\ 0<\theta_{2,1}<\sigma }}\theta_{1,2}A(\theta_{1,2}) \theta_{2,1}\,d\theta_{1,1} d\theta_{1,2} d\theta_{2,1}\\
&=-\left(\int_{0<\theta_{2,1}<\theta_{1,1}<\theta_{1,2}<\sigma}
+\int_{0<\theta_{1,1}<\theta_{2,1}<\theta_{1,2}<\sigma}
+\int_{0<\theta_{1,1}<\theta_{1,2}<\theta_{2,1}<\sigma}\right)\\
&\quad\theta_{1,2}A(\theta_{1,2}) \theta_{2,1}\,d\theta_{1,1} d\theta_{1,2} d\theta_{2,1}\\
&={\rm Ls}_{2,1,3}^{(1,0,1)}(\sigma)+{\rm Ls}_{1,2,3}^{(0,1,1)}(\sigma)+{\rm Ls}_{1,3,2}^{(0,1,1)}(\sigma).
\end{align*}
\end{example}
The second proposition is as follows.
\begin{proposition}\label{pr:j}
For index $\mathbf{k}=(k_{1},\dots,k_{n})$ with $n \ge 2$, we define 
\begin{align*}
\mathbf{k}_{k, j+}&= (k_{1},\dots,k_{j-1}, k_{j+1}+k,k_{j+2},\dots,k_{n})&(j \in \{1,\dots,n-1\}),\\
\mathbf{k}_{k, j-}&= (k_{1},\dots,k_{j-2}, k_{j-1}+k,k_{j+1},\dots,k_{n})&(j \in \{2,\dots,n\}).
\end{align*}
For $\mathbf{k}=(k_{1},\dots,k_{n})$ and $j \in \{1,\dots,n\}$, if $k_j -1 - l_j = 0$, then the following formula holds:
\begin{align}\label{eq:j}
{\rm Ls}_{\mathbf{k}}^{\mathbf{l}}(\sigma)&=\begin{cases}
-\frac{1}{k_{1}}\sigma^{k_{1}}&{\rm if}\ n=1, j=1,\\
-\frac{1}{k_1}{\rm Ls}_{\mathbf{k}_{k_{1},1+}}^{\mathbf{l}_{k_{1},1+}}(\sigma)&{\rm if}\ n \ge 2, j=1, \\
\frac{1}{k_j}\left({\rm Ls}_{\mathbf{k}_{k_{j},j-}}^{\mathbf{l}_{k_{j},j-}}(\sigma)
-{\rm Ls}_{\mathbf{k}_{k_{j},j+}}^{\mathbf{l}_{k_{j},j+}}(\sigma)\right)&{\rm if}\ n \ge 2, 1<j<n,\\
\frac{1}{k_n}\left({\rm Ls}_{\mathbf{k}_{k_{n},n-}}^{\mathbf{l}_{k_{n},n-}}(\sigma)-\sigma^{k_{n}}{\rm Ls}_{\mathbf{k}_{0,n-}}^{\mathbf{l}_{0,n-}}(\sigma)\right)&{\rm if}\ n \ge  2, j = n .
\end{cases}
\end{align}
\end{proposition}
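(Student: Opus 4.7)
The plan is to exploit the hypothesis $k_j - 1 - l_j = 0$, under which the integrand at position $j$ collapses from $\theta_j^{l_j}A^{k_j-1-l_j}(\theta_j)$ to the pure monomial $\theta_j^{k_j-1}$, with no logarithmic factor left. Consequently the $\theta_j$-integration can be carried out explicitly against its natural range, and the resulting boundary values get absorbed into the neighbouring $\theta^{l}$-factors to rebuild iterated log-sine integrals of depth $n-1$.

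First I would treat the generic interior case $1 < j < n$. By Fubini the $\theta_j$-integration can be peeled off over $(\theta_{j-1},\theta_{j+1})$, yielding
\[
\int_{\theta_{j-1}}^{\theta_{j+1}} \theta_j^{k_j-1}\, d\theta_j = \frac{\theta_{j+1}^{k_j} - \theta_{j-1}^{k_j}}{k_j}.
\]
The $\theta_{j+1}^{k_j}$ term multiplies the factor $\theta_{j+1}^{l_{j+1}}A^{k_{j+1}-1-l_{j+1}}(\theta_{j+1})$ to give $\theta_{j+1}^{l_{j+1}+k_j}A^{k_{j+1}-1-l_{j+1}}(\theta_{j+1})$, which is exactly the position-$j$ integrand of ${\rm Ls}_{\mathbf{k}_{k_j,j+}}^{\mathbf{l}_{k_j,j+}}(\sigma)$ in the reduced depth-$(n-1)$ tuple, since in the new tuple $k_j'=k_j+k_{j+1}$ and $l_j'=l_{j+1}+k_j$ satisfy $k_j'-1-l_j'=k_{j+1}-1-l_{j+1}$. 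Likewise the $-\theta_{j-1}^{k_j}$ term glues onto position $j-1$ and produces $-{\rm Ls}_{\mathbf{k}_{k_j,j-}}^{\mathbf{l}_{k_j,j-}}(\sigma)$. Combining the overall $(-1)^n$ with the $(-1)^{n-1}$ carried by each reduced integral accounts for the $1/k_j$ coefficient and the signs in the claimed identity.

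The three remaining cases differ only in how the $\theta_j$-range meets the boundary. For $j=1$ with $n\ge 2$ the integration is over $(0,\theta_2)$, so only the $\theta_2^{k_1}$ term survives and one obtains the single-term formula with coefficient $-1/k_1$. For $j=n$ the integration runs over $(\theta_{n-1},\sigma)$: the $\sigma^{k_n}$ contribution factors out of the remaining integral and yields $-\sigma^{k_n}{\rm Ls}_{\mathbf{k}_{0,n-}}^{\mathbf{l}_{0,n-}}(\sigma)/k_n$, while the $\theta_{n-1}^{k_n}$ term is absorbed into position $n-1$ as above. The degenerate case $n=j=1$ reduces to $-\int_0^\sigma \theta_1^{k_1-1}\,d\theta_1 = -\sigma^{k_1}/k_1$.

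The only genuine work is the sign and index bookkeeping — checking that $\theta_{j\pm 1}^{k_j}$ combined with the existing $\theta_{j\pm 1}^{l_{j\pm 1}}A^{k_{j\pm 1}-1-l_{j\pm 1}}(\theta_{j\pm 1})$ produces precisely the integrand prescribed by $\mathbf{k}_{k_j, j\pm}$ and $\mathbf{l}_{k_j, j\pm}$, and that the sign change from $(-1)^n$ to $(-1)^{n-1}$ delivers the $\pm 1/k_j$ coefficients. There is no analytic subtlety beyond absolute convergence, which has already been noted.
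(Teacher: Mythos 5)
Your proposal is correct and is essentially identical to the paper's proof: both observe that the hypothesis $k_j-1-l_j=0$ removes the logarithmic factor at position $j$, integrate $\theta_j$ explicitly over $(\theta_{j-1},\theta_{j+1})$ to get $(\theta_{j+1}^{k_j}-\theta_{j-1}^{k_j})/k_j$, and absorb the two boundary powers into the neighbouring factors to reconstruct the depth-$(n-1)$ integrals, with the same treatment of the boundary cases $j=1$ and $j=n$. The sign and index bookkeeping you outline matches the paper's computation exactly.
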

\begin{remark}
When $n \ge 2$, if there exist two or more distinct $j$'s in the set $\{j \mid k_{j}-1-l_{j}=0, 1\le j \le n\}$, then we can use formula (\ref{eq:j}) with any of these $j$. Let $j_{1},\dots, j_{m}$ $(j_{1}<\dots<j_{m})$ be all elements of $\{j \mid k_{j}-1-l_{j}=0, 1\le j \le n\}$. If we use formula (\ref{eq:j}) with $j_{a}$ $(1\le a \le m)$, then on the right hand side, the indices of the log-sine integrals have depth $n-1$ and $j_{1},\dots,j_{a-1},j_{a+1}-1,\dots,j_{m}-1$ are all elements of $\{j \mid k_{j}-1-l_{j}=0, 1\le j \le n-1\}$. If we choose any $j_{1}, j_{2}$ $(j_{1}<j_{2})$ in $j_{1},\dots, j_{m}$,  then the expressions given by applying formula (\ref{eq:j}) first with $j_{1}$ and then with $j_{2}-1$, and those by applying formula (\ref{eq:j}) first with $j_{2}$ and then with $j_{1}$ are the same. Moreover, by applying formula (\ref{eq:j}) repeatedly, we can see that any ${\rm Ls}_{\mathbf{k}}^{\mathbf{l}}(\sigma)$ satisfying $k_{j} -1 - l_{j} \ge 0$ for all $ j \in \{1,\dots,n\}$ can be written as a $\mathbb{Q}$-linear combination of the products of $\sigma^m\ (m \ge 0)$ and an iterated log-sine integral at $\sigma$ satisfying $k_{j} -1 - l_{j} > 0$ for all $j \in \{1,\dots,n\}$.
\end{remark}
\begin{proof}
We have
\begin{align*}
{\rm Ls}_{\mathbf{k}}^{\mathbf{l}}(\sigma)&=(-1)^{n}\int_{0<\theta_{1}<\dots<\theta_{n}<\sigma}\theta_j^{l_j}\,d\theta_j
\prod_{\substack{1 \le u \le n \\ u \ne j}}\theta_u^{l_u}A^{k_u-1-l_u}(\theta_u)\,d\theta_u\\
&=(-1)^{n}\int_{D_{\sigma}^{(j)}}
\left(\frac{\theta_{j+1}^{l_j+1}-\theta_{j-1}^{l_j+1}}{l_j+1}\right)\prod_{\substack{1 \le u \le n \\ u \ne j}}\theta_u^{l_u}A^{k_u-1-l_u}(\theta_u)\,d\theta_u\\
&=\begin{cases}
-\frac{1}{k_{1}}\sigma^{k_{1}}&{\rm if}\ n=1, j=1,\\
-\frac{1}{k_1}{\rm Ls}_{\mathbf{k}_{k_{1},1+}}^{\mathbf{l}_{k_{1},1+}}(\sigma)&{\rm if}\ n \ge 2, j=1, \\
\frac{1}{k_j}\left({\rm Ls}_{\mathbf{k}_{k_{j},j-}}^{\mathbf{l}_{k_{j},j-}}(\sigma)
-{\rm Ls}_{\mathbf{k}_{k_{j},j+}}^{\mathbf{l}_{k_{j},j+}}(\sigma)\right)&{\rm if}\ n \ge 2, 1<j<n,\\
\frac{1}{k_n}\left({\rm Ls}_{\mathbf{k}_{k_{n},n-}}^{\mathbf{l}_{k_{n},n-}}(\sigma)-\sigma^{k_{n}}{\rm Ls}_{\mathbf{k}_{0,n-}}^{\mathbf{l}_{0,n-}}(\sigma)\right)&{\rm if}\ n \ge  2, j = n,
\end{cases}
\end{align*}
where the domain of integration $D_{\sigma}^{(j)}$ is defined by
\[D_{\sigma}^{(j)}=\{0=\theta_0<\dots<\theta_{j-1}<\theta_{j+1}<\dots<\theta_{n+1}=\sigma\}.\]
Therefore, we obtain the desired proposition.
\end{proof}
\begin{example}\label{ex:j}
We obtain
\begin{align*}
{\rm Ls}_{1,3}^{(0,1)}(\sigma) = -{\rm Ls}_{4}^{(2)}(\sigma), \quad { \rm Ls}_{2}^{(1)}(\sigma) = -\frac{\sigma^{2}}{2}.
\end{align*}
By applying formula (\ref{eq:j}) to ${\rm Ls}_{2,1,3}^{(1,0,1)}(\sigma)$ with $j=1$ and $j=2$, we obtain
\begin{align*}
{\rm Ls}_{2,1,3}^{(1,0,1)}(\sigma) = -\frac{1}{2}{\rm Ls}_{3,3}^{(2,1)}(\sigma),\quad {\rm Ls}_{2,1,3}^{(1,0,1)}(\sigma) = {\rm Ls}_{3,3}^{(2,1)}(\sigma) - {\rm Ls}_{2,4}^{(1,2)}(\sigma),
\end{align*}
respectively. Since we obtain
\begin{align*}
{\rm Ls}_{3,3}^{(2,1)}(\sigma) = -\frac{1}{3}{\rm Ls}_{6}^{(4)}(\sigma),\quad {\rm Ls}_{2,4}^{(1,2)}(\sigma) = -\frac{1}{2}{\rm Ls}_{6}^{(4)}(\sigma),
\end{align*}
we also obtain
\begin{align*}
{\rm Ls}_{2,1,3}^{(1,0,1)}(\sigma) = \frac{1}{6}{\rm Ls}_{6}^{(4)}(\sigma),
\end{align*}
independently of the choice of order. In the same way, we obtain
\begin{align*}
{\rm Ls}_{1,2,3}^{(0,1,1)}(\sigma) = \frac{1}{3}{\rm Ls}_{6}^{(4)}(\sigma), \quad {\rm Ls}_{1,3,2}^{(0,1,1)}(\sigma) = -\frac{1}{2}{\rm Ls}_{6}^{(4)}(\sigma)+\frac{\sigma^2}{2}{\rm Ls}_{4}^{(2)}(\sigma).
\end{align*}
\end{example}
By Proposition \ref{pr:Lsshuffle} and Proposition \ref{pr:j}, we obtain the following theorem.
\begin{theorem}\label{pr:prod}
The product of iterated log-sine integrals at $\sigma$ satisfying $k_{j} -1 - l_{j} \ge 0$ for all $ j \in \{1,\dots,n\}$ can be written as a $\mathbb{Q}$-linear combination of the products of $\sigma^m\ (m \ge 0)$ and an iterated log-sine integral at $\sigma$ satisfying $k_{j} -1 - l_{j} > 0$ for all $j \in \{1,\dots,n\}$. Moreover, this expression does not depend on the order of applying formula (\ref{eq:shuffle}) and formula (\ref{eq:j}).
\end{theorem}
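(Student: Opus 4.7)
The statement combines two assertions: existence of a reduction of any product of iterated log-sine integrals (with componentwise $k_j - 1 - l_j \ge 0$) into the claimed normal form, and independence of the resulting expression from the order in which formulas (\ref{eq:shuffle}) and (\ref{eq:j}) are applied. I would argue both via a rewriting-system viewpoint whose reductions are the two rules.

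\textbf{Existence.} First I would apply Proposition \ref{pr:Lsshuffle} iteratively to the product: since that proposition already gives associativity of the shuffle decomposition, any product of $m$ iterated log-sine integrals unambiguously expands into a $\mathbb{Q}$-linear combination of single iterated log-sine integrals ${\rm Ls}_{\mathbf{K}}^{\mathbf{L}}(\sigma)$, each of which inherits the componentwise condition $K_j - 1 - L_j \ge 0$ because the shuffle only permutes the pairs $(k_u, l_u)$ among themselves. Next I would apply formula (\ref{eq:j}) to every position $j$ where $K_j - 1 - L_j = 0$; the remark following Proposition \ref{pr:j} already establishes that this process terminates and yields a $\mathbb{Q}$-linear combination of products $\sigma^m \cdot {\rm Ls}_{\mathbf{K}'}^{\mathbf{L}'}(\sigma)$ with $K'_j - 1 - L'_j > 0$ for every $j$. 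Termination is clear from the strict decrease of the bad-position count (possibly accompanied by a drop in depth and the spawning of a $\sigma^{K_n}$ factor when $j = n$).

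\textbf{Order independence.} Since the combined rewriting system terminates, by Newman's lemma it suffices to check local confluence, i.e.\ that whenever two reductions are simultaneously applicable they can be completed to a common expression. There are three cases: (a) two different shuffle steps on disjoint factor pairs, which commute by the associativity part of Proposition \ref{pr:Lsshuffle}; (b) two applications of (\ref{eq:j}) to different bad positions of the same single integral, handled verbatim by the remark after Proposition \ref{pr:j}; and (c) the mixed case where a product ${\rm Ls}_1 \cdot {\rm Ls}_2$ admits both a shuffle and an application of (\ref{eq:j}) to a bad position inside, say, ${\rm Ls}_1$. For (c) I would note that the proof of Proposition \ref{pr:j} carries out (\ref{eq:j}) purely by performing the inner $\theta_j$-integration over the simplex, whereas Proposition \ref{pr:Lsshuffle} is just the Fubini-style subdivision of the product simplex into ordered chambers. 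Performing the $\theta_j$-integration before the subdivision and performing it chamber-by-chamber after the subdivision give the same answer by Fubini; matching terms then shows that both orders produce the same $\mathbb{Q}$-linear combination of ${\rm Ls}$ symbols (with the same $\sigma^{K_n}$ boundary contributions when $j$ is a last index).

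\textbf{Main obstacle.} The technical heart is case (c). The bookkeeping is fiddly because, after shuffling, a bad position that originally lived at index $j$ in ${\rm Ls}_1$ gets sent to many different positions in many different shuffled terms; one must verify that summing the (\ref{eq:j}) outputs over all shuffled terms exactly equals the shuffle expansion of the single (\ref{eq:j}) output applied before shuffling, including the boundary term $-\sigma^{k_n} {\rm Ls}_{\mathbf{k}_{0,n-}}^{\mathbf{l}_{0,n-}}(\sigma)$ which looks asymmetric. I would handle this by treating (\ref{eq:j}) uniformly as the identity $\int_{\theta_{j-1}}^{\theta_{j+1}} \theta_j^{l_j}\, d\theta_j = \frac{\theta_{j+1}^{l_j+1} - \theta_{j-1}^{l_j+1}}{l_j+1}$ at the level of the integrand (with the convention $\theta_0 = 0$, $\theta_{n+1} = \sigma$) and then invoking Fubini on the product simplex; the shuffle subdivisions on both sides then match term-by-term, and the boundary $\sigma^{k_n}$ factor on the ``right'' and the $0$ on the ``left'' are simply the evaluations at $\theta_{n+1} = \sigma$ and $\theta_0 = 0$.
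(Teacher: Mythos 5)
Your proposal is correct and follows essentially the same route as the paper: the existence and termination parts are delegated to Proposition \ref{pr:Lsshuffle} and the remark after Proposition \ref{pr:j}, and the real content is the mixed critical pair, which the paper resolves by partitioning the shuffles according to the positions of the neighbours of the bad index and telescoping the outputs of formula (\ref{eq:j}) -- exactly the computation your Fubini/integrand-level treatment of $\int_{\theta_{j-1}}^{\theta_{j+1}}\theta_j^{l_j}\,d\theta_j$ packages, with the same term-by-term matching of symbols. Your explicit invocation of Newman's lemma just makes precise the reduction to local confluence that the paper asserts with ``it is enough to show''.
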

\begin{proof}
We prove only that the computations by using formula (\ref{eq:shuffle}) and  by using formula (\ref{eq:j}) is commutative. 
It is enough to show that when $k_{j} - 1 - l_{j} = 0$,
the expressions given by applying first formula (\ref{eq:shuffle}) and then formula (\ref{eq:j}), and those by applying first formula (\ref{eq:j}) and then formula (\ref{eq:shuffle}) are the same.
We assume $1 < j < n$. Also in the case of $j=1$ and $j=n$, we can prove it in the same way.
By applying formula (\ref{eq:shuffle}) to ${\rm Ls}_{\mathbf{k}}^{\mathbf{l}}(\sigma)\cdot {\rm Ls}_{\mathbf{k}'}^{\mathbf{l}'}(\sigma)$, we have
\begin{align*}
{\rm Ls}_{\mathbf{k}}^{\mathbf{l}}(\sigma)\cdot {\rm Ls}_{\mathbf{k}'}^{\mathbf{l}'}(\sigma) = \sum_{\tau \in S_{n, n'}}{\rm Ls}_{\tau(\mathbf{k},\mathbf{k}')}^{\tau(\mathbf{l},\mathbf{l}')}(\sigma).
\end{align*}
Here, we define
\begin{align*}
H_{j} = \left\{\begin{aligned}
&(\mathbf{h}_{0,a},\mathbf{h}_{a,b},\mathbf{h}_{b,c},\mathbf{h}_{c,n+n'})
\end{aligned}\mathrel{}\middle|\mathrel{} \substack{1 \le a < b < c\le n+n'\\ \tau \in S_{n, n'}\\ \mathbf{h}_{0,a}=(\tau(1),\dots,\tau(a-1))\\ \tau(a)=j-1\\ \mathbf{h}_{a,b} = (\tau(a+1),\dots,\tau(b-1))\\\tau(b)=j \\ \mathbf{h}_{b,c} = (\tau(b+1),\dots,\tau(c-1)\\\tau(c)=j+1\\ \mathbf{h}_{c,n+n'} = (\tau(c+1),\dots,\tau(n+n')) \\}\right\},
\end{align*}
\begin{align*}
H'_{j} &= \{(\mathbf{h}_{0,a},\mathbf{h}_{c,n+n'})\mid (\mathbf{h}_{0,a},\mathbf{h}_{a,b},\mathbf{h}_{b,c},\mathbf{h}_{c,n+n'}) \in H_{j}\},
\end{align*}
and for fixed $\mathbf{h}_{0,a}$ and $\mathbf{h}_{c,n+n'}$, 
\begin{align*}
H_{j}^{\mathbf{h}_{0,a},\mathbf{h}_{c,n+n'}} &= \{(\mathbf{h}'_{a,b},\mathbf{h}'_{b,c})\mid (\mathbf{h}_{0,a},\mathbf{h}'_{a,b},\mathbf{h}'_{b,c},\mathbf{h}_{c,n+n'}) \in H_{j}\}.
\end{align*}
Moreover, for $\mathbf{h}_{i,j} = (h_{i+1}, \dots, h_{j-1})$, we define
\begin{align*}
l_{\mathbf{h}_{i,j}}=(l_{h_{i+1}},\dots,l_{h_{j-1}}),\quad k_{\mathbf{h}_{i,j}}=(k_{h_{i+1}},\dots,k_{h_{j-1}}).
\end{align*}
Then we have
\begin{align*}
&\sum_{\tau \in S_{n, n'}}{\rm Ls}_{\tau(\mathbf{k},\mathbf{k}')}^{\tau(\mathbf{l},\mathbf{l}')}(\sigma)\\
&=\sum_{(\mathbf{h}_{0,a},\mathbf{h}_{c,n+n'}) \in H'_{j}}\sum_{(\mathbf{h}_{a,b},\mathbf{h}_{b,c}) \in H_{j}^{\mathbf{h}_{0,a},\mathbf{h}_{c,n+n'}}} {\rm Ls}_{k_{\mathbf{h}_{0,a}}, k_{j-1}, k_{\mathbf{h}_{a,b}}, k_{j}, k_{\mathbf{h}_{b,c}}, k_{j+1}, k_{\mathbf{h}_{c,n+n'}}}^{l_{\mathbf{h}_{0,a}}, l_{j-1}, l_{\mathbf{h}_{a,b}}, l_{j}, l_{\mathbf{h}_{b,c}}, l_{j+1}, l_{\mathbf{h}_{c,n+n'}}}(\sigma).
\end{align*}
Here, if we regard $(\mathbf{h}_{a,b},\mathbf{h}_{b,c}) \in \mathbb{N}^{b-a-1}\times \mathbb{N}^{c-b-1}$ as $(\mathbf{h}_{a,b},\mathbf{h}_{b,c}) \in \mathbb{N}^{c-a-2}$, then all $(\mathbf{h}_{a,b},\mathbf{h}_{b,c}) \in H_{j}^{\mathbf{h}_{0,a},\mathbf{h}_{c,n+n'}}$ can be regarded as the same $(\mathbf{h}_{a,b},\mathbf{h}_{b,c}) \in \mathbb{N}^{c-a-2}$. Therefore we denote such $(\mathbf{h}_{a,b},\mathbf{h}_{b,c}) \in \mathbb{N}^{c-a-2}$ by $\mathbf{h}_{a,c} \in \mathbb{N}^{c-a-2}$.
By applying formula (\ref{eq:j}) , the above equation is equal to
\begin{align*}
&\sum_{(\mathbf{h}_{0,a},\mathbf{h}_{c,n+n'}) \in H'_{j}} \frac{1}{k_{j}}\left({\rm Ls}_{k_{\mathbf{h}_{0,a}}, k_{j-1}+k_{j}, k_{\mathbf{h}_{a,c}}, k_{j+1}, k_{\mathbf{h}_{c,n+n'}}}^{l_{\mathbf{h}_{0,a}}, l_{j-1}+k_{j}, l_{\mathbf{h}_{a,c}}, l_{j+1}, l_{\mathbf{h}_{c,n+n'}}}(\sigma)\right.\\
&\qquad\qquad\qquad\qquad\qquad\qquad\qquad\left.-{\rm Ls}_{k_{\mathbf{h}_{0,a}}, k_{j-1}, k_{\mathbf{h}_{a,c}}, k_{j}+k_{j+1}, k_{\mathbf{h}_{c,n+n'}}}^{l_{\mathbf{h}_{0,a}}, l_{j-1}, l_{\mathbf{h}_{a,c}}, k_{j}+l_{j+1}, l_{\mathbf{h}_{c,n+n'}}}(\sigma)\right)\\
&=\frac{1}{k_{j}}\left(\sum_{\tau \in S_{n-1,n'}}{\rm Ls}_{\tau(\mathbf{k}_{k_{j},j-},\mathbf{k}')}^{\tau(\mathbf{l}_{k_{j},j-},\mathbf{l}')}(\sigma)-\sum_{\tau \in S_{n-1,n'}}{\rm Ls}_{\tau(\mathbf{k}_{k_{j},j+},\mathbf{k}')}^{\tau(\mathbf{l}_{k_{j},j+},\mathbf{l}')}(\sigma)\right).
\end{align*}

On the other hand, by applying formula (\ref{eq:j}) to ${\rm Ls}_{\mathbf{k}}^{\mathbf{l}}(\sigma)\cdot {\rm Ls}_{\mathbf{k}'}^{\mathbf{l}'}(\sigma)$, we have
\begin{align*}
{\rm Ls}_{\mathbf{k}}^{\mathbf{l}}(\sigma)\cdot {\rm Ls}_{\mathbf{k}'}^{\mathbf{l}'}(\sigma)=
\frac{1}{k_j}\left({\rm Ls}_{\mathbf{k}_{k_{j},j-}}^{\mathbf{l}_{k_{j},j-}}(\sigma)-{\rm Ls}_{\mathbf{k}_{k_{j},j+}}^{\mathbf{l}_{k_{j},j+}}(\sigma)\right)\cdot {\rm Ls}_{\mathbf{k}'}^{\mathbf{l}'}(\sigma).
\end{align*}
By formula (\ref{eq:shuffle}), we have
\begin{align*}
{\rm Ls}_{\mathbf{k}}^{\mathbf{l}}(\sigma)\cdot {\rm Ls}_{\mathbf{k}'}^{\mathbf{l}'}(\sigma)=
\frac{1}{k_{j}}\left(\sum_{\tau \in S_{n-1,n'}}{\rm Ls}_{\tau(\mathbf{k}_{k_{j},j-},\mathbf{k}')}^{\tau(\mathbf{l}_{k_{j},j-},\mathbf{l}')}(\sigma)-\sum_{\tau \in S_{n-1,n'}}{\rm Ls}_{\tau(\mathbf{k}_{k_{j},j+},\mathbf{k}')}^{\tau(\mathbf{l}_{k_{j},j+},\mathbf{l}')}(\sigma)\right).
\end{align*}
Therefore this theorem is proved.
\end{proof}
\begin{example}
For example, by applying formula (\ref{eq:shuffle}) to ${\rm Ls}_{1,3}^{(0,1)}(\sigma)\cdot{\rm Ls}_{2}^{(1)}(\sigma)$, we have
\begin{align*}
{\rm Ls}_{1,3}^{(0,1)}(\sigma)\cdot{\rm Ls}_{2}^{(1)}(\sigma)={\rm Ls}_{2,1,3}^{(1,0,1)}(\sigma)+{\rm Ls}_{1,2,3}^{(0,1,1)}(\sigma)+{\rm Ls}_{1,3,2}^{(0,1,1)}(\sigma)
\end{align*}
 (see Example \ref{ex:shuffle}). By applying formula (\ref{eq:j}) to each term of the right hand side, we obtain
\begin{align*}
{\rm Ls}_{1,3}^{(0,1)}(\sigma)\cdot{\rm Ls}_{2}^{(1)}(\sigma) = \frac{\sigma^2}{2}{\rm Ls}_{4}^{(2)}(\sigma)
\end{align*}
(see Example \ref{ex:j}). On the other hand, by applying formula (\ref{eq:j}) to ${\rm Ls}_{1,3}^{(0,1)}(\sigma)$ and ${\rm Ls}_{2}^{(1)}(\sigma)$, we obtain the same result:
\begin{align*}
{\rm Ls}_{1,3}^{(0,1)}(\sigma)\cdot{\rm Ls}_{2}^{(1)}(\sigma) = -{\rm Ls}_{4}^{(2)}(\sigma)\cdot-\frac{\sigma^{2}}{2} = \frac{\sigma^2}{2}{\rm Ls}_{4}^{(2)}(\sigma)
\end{align*}
(see Example \ref{ex:j}).
\end{example}
\begin{theorem}\label{th:LitoLs}
For $\mathbf{k}=(k_{1},\dots,k_{n})\in\mathbb{N}^{n}$ and $0<\sigma<\pi$, we have
\begin{align*}
&\mathrm{Li}_{\mathbf{k}}(1-e^{\pm i\sigma})\\
&=(\mp i)^n\int_{0<\theta_{1}<\dots<\theta_{n}<\theta_{n+1}=\sigma}\prod_{u=1}^{n} \frac{\left(A(\theta_{u+1}) - A(\theta_{u}) \pm \frac{i\theta_{u+1}}{2} \mp \frac{i\theta_{u}}{2}\right)^{k_{u}-1}}{(k_{u}-1)!}\,d\theta_{u}.
\end{align*}
\end{theorem}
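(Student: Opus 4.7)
The plan is to derive the formula by starting from a standard iterated integral representation of $\mathrm{Li}_{\mathbf{k}}$ and performing the change of variables $t_u=1-e^{\pm i\theta_u}$.

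First, by induction on $n$ using the recursive derivative formulas
$$\frac{d}{dz}\mathrm{Li}_{k_1,\dots,k_n}(z)=\begin{cases}\mathrm{Li}_{k_1,\dots,k_n-1}(z)/z & (k_n\ge 2),\\ \mathrm{Li}_{k_1,\dots,k_{n-1}}(z)/(1-z) & (k_n=1),\end{cases}$$
which follow at once from the series definition, I get the Chen iterated integral of $\mathrm{Li}_{\mathbf{k}}(z)$ in the one-forms $dt/t$ and $dt/(1-t)$. Integrating out each block of $dt/t$'s via $\int_{t}^{u}\frac{dt_1}{t_1}\cdots\frac{dt_{k-1}}{t_{k-1}}=\frac{(\log u-\log t)^{k-1}}{(k-1)!}$ then yields the compact form
$$\mathrm{Li}_{\mathbf{k}}(z)=\int_{0<t_1<\cdots<t_n<t_{n+1}=z}\prod_{u=1}^n\frac{(\log t_{u+1}-\log t_u)^{k_u-1}}{(k_u-1)!}\,\frac{dt_u}{1-t_u},$$
which is valid along any path from $0$ to $z$ in $\mathbb{C}\setminus\{0,1\}$.

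Second, I substitute $t_u=1-e^{\pm i\theta_u}$. A direct computation gives $dt_u/(1-t_u)=\mp i\,d\theta_u$, while the factorisation $1-e^{\pm i\theta}=\mp 2i\sin(\theta/2)\,e^{\pm i\theta/2}$ (valid for $0<\theta<\pi$) together with the principal branch of $\log$ yields
$$\log(1-e^{\pm i\theta})=A(\theta)\pm\frac{i\theta}{2}\mp\frac{i\pi}{2}.$$
The constant $\mp i\pi/2$ cancels in each difference $\log t_{u+1}-\log t_u$, producing exactly $A(\theta_{u+1})-A(\theta_u)\pm i\theta_{u+1}/2\mp i\theta_u/2$; the $n$ factors $\mp i\,d\theta_u$ combine into the prefactor $(\mp i)^n$; and since $\theta\mapsto 1-e^{\pm i\theta}$ is injective on $(0,\pi)$, the simplex $\{0<t_1<\cdots<t_n<z\}$ pulls back to $\{0<\theta_1<\cdots<\theta_n<\sigma\}$.

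The one subtle point, which I expect to be the only genuine obstacle, is to verify that the contour $\theta\mapsto 1-e^{\pm i\theta}$ ($\theta\in[0,\sigma]$) is an admissible path for the analytic continuation of $\mathrm{Li}_{\mathbf{k}}$ to $1-e^{\pm i\sigma}$. For $\theta\in(0,\pi)$ the point $1-e^{\pm i\theta}$ has nonzero imaginary part (of sign $\mp$) and real part $1-\cos\theta\in(0,2)$, so it lies in $\mathbb{C}\setminus([1,\infty)\cup\{0,1\})$, and the principal branch of $\log(1-e^{\pm i\theta})$ remains continuous along the path, which is what makes the $\mp i\pi/2$ constants consistent across all the $t_u$'s so that they cancel. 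Once this is in place, combining the two steps gives the stated identity.
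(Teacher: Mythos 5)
Your proposal is correct and follows essentially the same route as the paper: both start from the iterated integral representation of $\mathrm{Li}_{\mathbf{k}}$ in the forms $dt/t$, $dt/(1-t)$, substitute $t_u=1-e^{\pm i\theta_u}$ so that $dt_u/(1-t_u)=\mp i\,d\theta_u$, and conclude via $\log(1-e^{\pm i\theta})=A(\theta)\pm i(\theta-\pi)/2$, with the constant $\mp i\pi/2$ cancelling in each difference. Your extra care about the admissibility of the path and the continuity of the branch of the logarithm along it is a point the paper passes over silently, but it is the same argument.
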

Since $e^{\pi i/3} = 1-e^{-\pi i/3}$, Corollary \ref{th:LitoLspi/3} holds.
We can see that any $\mathrm{Li}_{\mathbf{k}}(1-e^{\pm i\sigma})$ can be written as a $\mathbb{Q}(i)$-linear combination of iterated log-sine integrals at $\sigma$ satisfying $k_{j} -1 - l_{j} \ge 0$ for all $ j \in \{1,\dots,n\}$. 
\begin{proof}
For $z=t_{n+1} \in \mathbb{C}\setminus[1,\infty)$, the multiple polylogarithm is written as
\begin{align*}
&\mathrm{Li}_{\mathbf{k}}(t_{n+1})=\int_{\mathbf{C}}
\prod_{u=1}^{n}\frac{1}{(k_{u}-1)!}\left(\int_{t_{u}}^{t_{u+1}}\frac{dt}{t}\right)^{k_{u}-1}\frac{dt_{u}}{1-t_{u}}.
\end{align*}
Here, $\mathbf{C}$ means the integral with respect to $t_{1},\dots,t_{n}$ in order on a path $0$ to $t_{n+1}$ which does not pass $[1,\infty)$. Namely, for a path $\gamma : [0,1] \rightarrow \mathbb{C}\setminus[1,\infty)$, $\gamma(0)=0, \gamma(1)=t_{n+1}$,
\[\mathbf{C} = \left\{(t_{1},\dots,t_{n}) \in (\mathbb{C}\setminus[1,\infty))^{n}\mathrel{}\middle|\mathrel{} \begin{aligned}&0<a_{1}<\dots<a_{n}<1\\ &\gamma(a_{1}) = t_{1},\dots,\gamma(a_{n})=t_{n}\end{aligned}\right\}.\]
We put $\gamma(a) = 1-e^{\pm ai\sigma}$ and $t_{u} = \gamma(\theta_{u}/\sigma)= 1-e^{\pm i\theta_{u}}$. Then we have
\begin{align*}
&\mathrm{Li}_{\mathbf{k}}(1-e^{\pm i\sigma})\\
=&\int_{0<\theta_{1}<\dots<\theta_{n}<\theta_{n+1}=\sigma}\prod_{u=1}^{n}\frac{1}{(k_{u}-1)!}
\left(\int_{1-e^{\pm i\theta_{u}}}^{1-e^{\pm i\theta_{u+1}}}\frac{dt}{t}\right)^{k_{u}-1}(\mp i)\,d\theta_{u}\\
=&(\mp i)^{n}
\int_{0<\theta_{1}<\dots<\theta_{n}<\theta_{n+1}=\sigma}\prod_{u=1}^{n}\frac{\left(\log(1-e^{\pm i\theta_{u+1}})-\log(1-e^{\pm i\theta_{u}})\right)^{k_{u}-1}}{(k_{u}-1)!}\,d\theta_{u}.
\end{align*}
Finally, we use the formula
\begin{align*}
\log(1-e^{\pm i\theta})&= A(\theta)\pm i\frac{\theta-\pi}{2}\qquad(0<\theta<2\pi),
\end{align*}
and the proof is complete.
\end{proof}
By using Theorem \ref{th:LitoLs}, we can give another proof of the following theorem.
\begin{theorem}[{Borwein, Broadhurst and Kamnitzer \cite[theorem 4.5]{BBK}}]
For nonnegative integers a and b,
\[{\rm mgl}(\{1\}^{a},2,\{1\}^{b}) = (-1)^{a+b+1}\frac{(\pi/3)^{a+b+2}}{2(a+b+2)!},\]
where {\rm mgl} is defined by
\[{\rm mgl}(a_{1},\dots,a_{k}) = \Re (i^{a_{1}+\dots+a_{k}} \mathrm{Li}_{a_{1},\dots,a_{k}}(e^{\frac{\pi}{3}i})).\]
\end{theorem}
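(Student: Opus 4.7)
The plan is to apply Theorem~\ref{th:LitoLs} directly with $\sigma = \pi/3$ via the identity $e^{\pi i/3} = 1 - e^{-\pi i/3}$ (choosing the $-$ sign, which yields the prefactor $i^n$, where $n = a+b+1$ is the depth of $\mathbf{k}$). The key observation is that for the index $\mathbf{k} = (\{1\}^a, 2, \{1\}^b)$, one has $k_u - 1 = 0$ for every $u \neq a+1$, so each of those factors in the product collapses to $1$; only the position $u = a+1$ contributes, giving the single linear factor $A(\theta_{a+2}) - A(\theta_{a+1}) - \tfrac{i}{2}(\theta_{a+2} - \theta_{a+1})$. Hence
\begin{align*}
\mathrm{Li}_{\mathbf{k}}(e^{\pi i/3}) = i^n \int_{0<\theta_1<\dots<\theta_n<\pi/3}\Bigl[A(\theta_{a+2}) - A(\theta_{a+1}) - \tfrac{i}{2}(\theta_{a+2}-\theta_{a+1})\Bigr]\,d\theta_1\cdots d\theta_n,
\end{align*}
with the convention $\theta_{n+1} := \pi/3$ used when $b = 0$.

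Next I would multiply by $i^{|\mathbf{k}|} = i^{a+b+2}$ and take the real part. The combined prefactor is $i^{a+b+2}\cdot i^{a+b+1} = (-1)^{a+b+1}\,i$, and since $A$ is real-valued, the imaginary part of the bracketed integrand is simply $-\tfrac{1}{2}(\theta_{a+2}-\theta_{a+1})$. A short computation using $\Re(i\,X) = -\Im(X)$ gives
\[\mathrm{mgl}(\mathbf{k}) = (-1)^{a+b+1}\cdot\tfrac{1}{2}\int_{0<\theta_1<\dots<\theta_n<\pi/3}(\theta_{a+2}-\theta_{a+1})\,d\theta_1\cdots d\theta_n.\]

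The remaining simplex integral is an elementary Beta-function evaluation: splitting the iterated integration at $\theta_j$ into the independent pieces on $(\theta_1,\dots,\theta_{j-1})$ and $(\theta_{j+1},\dots,\theta_n)$ and applying $\int_0^\sigma \theta^j(\sigma-\theta)^{n-j}\,d\theta = \sigma^{n+1}\,j!(n-j)!/(n+1)!$ yields $\int_{0<\theta_1<\dots<\theta_n<\sigma}\theta_j\,d\theta_1\cdots d\theta_n = j\,\sigma^{n+1}/(n+1)!$ for $1\le j\le n$, and the same formula extends to the constant $\theta_{n+1}:=\sigma$ (giving $\sigma\cdot\sigma^n/n!$, i.e.\ $j=n+1$). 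Taking the difference at $j=a+2$ and $j=a+1$ telescopes to $\sigma^{n+1}/(n+1)!$, and substituting $\sigma = \pi/3$ with $n+1 = a+b+2$ produces exactly $(-1)^{a+b+1}(\pi/3)^{a+b+2}/(2(a+b+2)!)$. The one step requiring genuine care is the sign and power-of-$i$ bookkeeping in passing from $\mathrm{Li}_{\mathbf{k}}(e^{\pi i/3})$ to $\Re(i^{|\mathbf{k}|}\mathrm{Li}_{\mathbf{k}}(e^{\pi i/3}))$; apart from that and the boundary convention for $\theta_{n+1}$ in the edge case $b = 0$, the proof is a direct specialization of Theorem~\ref{th:LitoLs} followed by a Beta integral.
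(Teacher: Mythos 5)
Your proposal is correct and follows essentially the same route as the paper: apply Theorem~\ref{th:LitoLs} (equivalently Corollary~\ref{th:LitoLspi/3}) to collapse all depth-one factors, multiply by $i^{a+b+2}$, take the real part to isolate $\tfrac{1}{2}(\theta_{a+2}-\theta_{a+1})$, and evaluate the simplex integral via $\int_{0<\theta_1<\dots<\theta_n<\sigma}\theta_j\,d\theta_1\cdots d\theta_n = j\sigma^{n+1}/(n+1)!$. The sign and power-of-$i$ bookkeeping and the telescoping $(a+2)-(a+1)=1$ all check out, and your explicit handling of the $b=0$ boundary convention is a point the paper leaves implicit.
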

\begin{proof}[Another proof]
By using Theorem \ref{th:LitoLs}, we have
\begin{align*}
&\mathrm{Li}_{\{1\}^{a},2,\{1\}^{b}}(e^{\frac{\pi}{3}i})\\
&=i^{a+b+1}\int_{0<\theta_{1}<\dots<\theta_{a+b+1}<\frac{\pi}{3}} A(\theta_{a+2}) - A(\theta_{a+1}) - \frac{i\theta_{a+2}}{2} + \frac{i\theta_{a+1}}{2}\,d\theta_{1}\cdots d\theta_{a+b+1}.
\end{align*}
Therefore, we have
\begin{align*}
&\Re\left( i^{a+b+2}\mathrm{Li}_{\{1\}^{a},2,\{1\}^{b}}(e^{\frac{\pi}{3}i})\right)\\
&=\frac{(-1)^{a+b+1}}{2}\int_{0=\theta_{0}<\theta_{1}<\dots<\theta_{a+b+1}<\theta_{a+b+2}=\frac{\pi}{3}}  \theta_{a+2} - \theta_{a+1}\,d\theta_{1}\cdots d\theta_{a+b+1}\\
&=\frac{(-1)^{a+b+1}}{2}\left(\frac{a+2}{(a+b+2)!}\left(\frac{\pi}{3}\right)^{a+b+2}-\frac{a+1}{(a+b+2)!}\left(\frac{\pi}{3}\right)^{a+b+2}\right)\\
&=(-1)^{a+b+1}\frac{(\pi/3)^{a+b+2}}{2(a+b+2)!}
\end{align*}
\end{proof}
\section{Applications to the theory of MZVs}\label{se:zeta}
In this section, we provide examples, a result obtained by computer and alternative proofs of some facts.
First, note that combining Corollay \ref{th:LitoLspi/3}, Corollary \ref{pr:prodpi/3} and Theorem \ref{th:BBK} for the case $\mathbf{k} = (\{1\}^{a-1},b+1)$ gives the following formula, which was provided in \cite{BBK}. 
\begin{proposition} For $a, b \in \mathbb{N}$, we have
\begin{align}
&\zeta(\{1\}^{a-1},b+1) \label{eq:log-sine weight 1}\\
&=-\frac{1}{a!b!}\left(\frac{\pi}{3}i\right)^{a}\left(-\frac{\pi}{3}i\right)^{b}+\frac{i^{a}}{(a-1)!b!}\int_{0}^{\frac{\pi}{3}}\theta^{a-1}\left(-A(\theta)+i\frac{\theta-\pi}{2}\right)^{b}\,d\theta\nonumber \\
&\quad+\frac{(-i)^{b}}{a!(b-1)!}\int_{0}^{\frac{\pi}{3}}\theta^{b-1}\left(-A(\theta)-i\frac{\theta-\pi}{2}\right)^{a}\,d\theta . \nonumber
\end{align}
\end{proposition}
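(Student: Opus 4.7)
The plan is to substitute $\mathbf{k}=(\{1\}^{a-1},b+1)$, whose dual is $\mathbf{k}^{*}=(\{1\}^{b-1},a+1)$, into Theorem \ref{th:BBK} and then collapse every multiple polylogarithm on the right-hand side to a one-dimensional integral via Corollary \ref{th:LitoLspi/3}. Setting $N=a+b$, a direct unwinding of the recursion gives two regimes: $\mathbf{k}^{(m)}=(\{1\}^{a-1},b+1-m)$ for $0\le m\le b$ and $\mathbf{k}^{(m)}=(\{1\}^{a+b-m})$ for $b+1\le m\le a+b$, and analogously for $(\mathbf{k}^{*})^{(N-m)}$. This splits the BBK sum into a first group ($m\in[0,b]$), in which the conjugated factor is the stringy $\overline{\mathrm{Li}_{(\{1\}^{m})}(e^{\pi i/3})}$, and, after reindexing $m'=m-b$, a second group ($m'\in[1,a]$), in which the unconjugated factor is the stringy $\mathrm{Li}_{(\{1\}^{a-m'})}(e^{\pi i/3})$.

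For every stringy factor I would use the classical identity $\mathrm{Li}_{(\{1\}^{c})}(z)=(-\log(1-z))^{c}/c!$ together with $1-e^{\pi i/3}=e^{-\pi i/3}$, which gives $\mathrm{Li}_{(\{1\}^{c})}(e^{\pi i/3})=(\pi i/3)^{c}/c!$. For every non-stringy factor, whose last entry is at least $2$, Corollary \ref{th:LitoLspi/3} applies directly: since $A(\pi/3)=0$ and all but one of the exponents $k_{u}-1$ vanish, integrating out the inner variables collapses the iterated integral to
\[\mathrm{Li}_{(\{1\}^{a-1},c)}(e^{\pi i/3})=\frac{i^{a}}{(a-1)!(c-1)!}\int_{0}^{\pi/3}\theta^{a-1}g(\theta)^{c-1}\,d\theta,\]
where $g(\theta):=-A(\theta)+i(\theta-\pi/3)/2$, together with the conjugate statement for the dual factors.

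The key algebraic step is to recognise the two sums as truncated binomial expansions. Setting $f(\theta):=-A(\theta)+i(\theta-\pi)/2$ one has $f=g-i\pi/3$, so
\[\sum_{m=0}^{b}\binom{b}{m}\left(-\tfrac{\pi i}{3}\right)^{m}g(\theta)^{b-m}=f(\theta)^{b},\]
and likewise $\sum_{m'=0}^{a}\binom{a}{m'}(\pi i/3)^{a-m'}\tilde g(\theta)^{m'}=\tilde f(\theta)^{a}$ for the conjugates $\tilde g=\overline g$ and $\tilde f=\overline f$. Exchanging sum and integral, the first group collapses to $\tfrac{i^{a}}{(a-1)!b!}\int_{0}^{\pi/3}\theta^{a-1}f(\theta)^{b}\,d\theta$ plus a boundary correction $-\tfrac{(\pi i/3)^{a}(-\pi i/3)^{b}}{a!b!}$ coming from the missing $m=b$ binomial endpoint; this correction is cancelled exactly by the stringy-stringy BBK term $\mathrm{Li}_{(\{1\}^{a})}(e^{\pi i/3})\overline{\mathrm{Li}_{(\{1\}^{b})}(e^{\pi i/3})}=\tfrac{(\pi i/3)^{a}(-\pi i/3)^{b}}{a!b!}$. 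The second group yields the symmetric integral $\tfrac{(-i)^{b}}{a!(b-1)!}\int_{0}^{\pi/3}\theta^{b-1}\tilde f(\theta)^{a}\,d\theta$ together with an analogous boundary correction; since the stringy-stringy term at $m=b$ has already been absorbed into the first group, this correction now survives and becomes the closed-form leading term of the proposition.

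The main obstacle is the combinatorial bookkeeping of the BBK sum: correctly enumerating the two forms taken by $\mathbf{k}^{(m)}$ and $(\mathbf{k}^{*})^{(N-m)}$, pairing them along the sum without double-counting at the boundary $m=b$, and tracking how precisely one of the two binomial boundary corrections is absorbed by the stringy-stringy term while the other survives to furnish the constant $-\tfrac{1}{a!b!}(\pi i/3)^{a}(-\pi i/3)^{b}$.
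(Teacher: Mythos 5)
Your proposal is correct and follows essentially the same route as the paper, which proves this proposition simply by invoking the combination of Theorem \ref{th:BBK} with Corollary \ref{th:LitoLspi/3} (and Corollary \ref{pr:prodpi/3}) for $\mathbf{k}=(\{1\}^{a-1},b+1)$; your write-up just supplies the details, and the bookkeeping checks out — the two binomial collapses with $f=g-i\pi/3$ produce exactly the two integrals, and the surviving $m'=0$ endpoint gives the constant $-\frac{1}{a!b!}(\pi i/3)^{a}(-\pi i/3)^{b}$. The only cosmetic difference is that by evaluating the stringy factors in closed form via $\mathrm{Li}_{\{1\}^{c}}(e^{\pi i/3})=(\pi i/3)^{c}/c!$ you never actually need the shuffle reduction of Corollary \ref{pr:prodpi/3}.
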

\subsection{Examples}
First, we calculate $\zeta(2)$. By using Theorem \ref{th:BBK} and Corollay \ref{th:LitoLspi/3}, we obtain
\begin{align*}
\zeta(2) &= \mathrm{Li}_{2}(e^{\frac{\pi}{3}i})+\mathrm{Li}_{1}(e^{\frac{\pi}{3}i})\overline{\mathrm{Li}_{1}(e^{\frac{\pi}{3}i})}+\overline{\mathrm{Li}_{2}(e^{\frac{\pi}{3}i})}\\
&=i\int_{0<\theta_{1}<\frac{\pi}{3}}\left(- A(\theta_{1}) - \frac{i\pi}{6} + \frac{i\theta_{1}}{2}\right)\,d\theta_{1}+\int_{0<\theta_{1}<\frac{\pi}{3}}\,d\theta_{1}\cdot\int_{0<\theta_{1}<\frac{\pi}{3}}\,d\theta_{1}\\
&\quad-i\int_{0<\theta_{1}<\frac{\pi}{3}}\left(- A(\theta_{1}) + \frac{i\pi}{6} - \frac{i\theta_{1}}{2}\right)\,d\theta_{1}\\
&= \frac{\pi^2}{6}.
\end{align*}
Similarly, by calculating $\zeta(3)$, we obtain
\begin{align}
&\zeta(3) =\frac{\pi}{2}{\rm Ls}_{2}^{(0)}\left(\frac{\pi}{3}\right)-\frac{3}{2}{\rm Ls}_{3}^{(1)}\left(\frac{\pi}{3}\right)-\frac{i}{2}{\rm Ls}_{3}^{(0)}\left(\frac{\pi}{3}\right)-\frac{7i}{216}\pi^{3} \label{eq:z3}.
\end{align}
By taking the real part of (\ref{eq:z3}), we have
\begin{align*}
\zeta(3) &=\frac{1}{2}\pi{\rm Ls}_{2}^{(0)}\left(\frac{\pi}{3}\right)-\frac{3}{2}{\rm Ls}_{3}^{(1)}\left(\frac{\pi}{3}\right),
\end{align*}
which can be seen in \cite[p.230]{L}.

Next, we calculate MZVs of weight $4$. The real parts of all MZVs of weight $4$ are as follows.
\begin{align}\label{eq:weight4}
\zeta(4)&=-\frac{23}{5184}\pi^4-\frac{1}{4}\pi{\rm Ls}_{3}^{(0)}\left(\frac{\pi}{3}\right)+\frac{1}{4}{\rm Ls}_{4}^{(1)}\left(\frac{\pi}{3}\right),\\
\zeta(1,1,2) &=-\frac{23}{5184}\pi^4-\frac{1}{4}\pi{\rm Ls}_{3}^{(0)}\left(\frac{\pi}{3}\right)+\frac{1}{4}{\rm Ls}_{4}^{(1)}\left(\frac{\pi}{3}\right),\nonumber\\
\zeta(1,3) &=\frac{7}{1296}\pi^4+{\rm Ls}_{4}^{(1)}\left(\frac{\pi}{3}\right),\nonumber\\
\zeta(2,2) &=\frac{1}{324}\pi^4-2{\rm Ls}_{4}^{(1)}\left(\frac{\pi}{3}\right).\nonumber
\end{align}
By taking the imaginary part of (\ref{eq:z3}), we obtain
\begin{align}
{\rm Ls}_{3}^{(0)}\left(\frac{\pi}{3}\right)=-\frac{7}{108}\pi^{3} \label{eq:ls30},
\end{align}
which can be seen in \cite{V} and \cite{L}.
Moreover, by taking the imaginary part of the following expression of $\zeta(1,4)$:
\begin{align*}
\zeta(1,4)=-\frac{17}{25920} i \, \pi^{5} + \frac{1}{8} \, \pi^{2} {\rm Ls}_{3}^{(1)}\left(\frac{\pi}{3}\right) - \frac{1}{2} \, \pi {\rm Ls}_{4}^{(2)}\left(\frac{\pi}{3}\right)\\
- \frac{1}{4} i \, \pi {\rm Ls}_{4}^{(1)}\left(\frac{\pi}{3}\right) + \frac{3}{8} \, {\rm Ls}_{5}^{(3)}\left(\frac{\pi}{3}\right) - \frac{1}{6} \, {\rm Ls}_{5}^{(1)}\left(\frac{\pi}{3}\right),
\end{align*}
we obtain
\begin{align}
{\rm Ls}_{4}^{(1)}\left(\frac{\pi}{3}\right)=-\frac{17}{6480}\pi^{4}, \label{eq:ls41}
\end{align}
which can be seen in \cite{V} and \cite{L}.
By applying (\ref{eq:ls30}) and (\ref{eq:ls41}) to (\ref{eq:weight4}), we obtain 
\[\zeta(4) = \frac{\pi^4}{90},\quad
\zeta(1,1,2) = \frac{\pi^4}{90},\quad
\zeta(1,3) =\frac{\pi^4}{360},\quad
\zeta(2,2) =\frac{\pi^4}{120}.\]
Therefore, we obtain a relation among multiple zeta values:
\[3\zeta(4)=3\zeta(1,1,2)=12\zeta(1,3)=4\zeta(2,2).\]
In this way, by applying relations among iterated log-sine integrals to the iterated log-sine integral expression of multiple zeta values, we obtain relations among multiple zeta values.
\subsection{A result obtained by using computer}
We compute the number of relations among MZVs obtained by our method. Let $\mathcal{Z}_{k}$ be the $\mathbb{Q}$-vector space generated by all MZVs of weight $k$, and $d_{0}=1, d_{1}=0, d_{2}=1$ and $d_{k} =  d_{k-2}+d_{k-3}$ $(k \ge 3)$.
Then Zagier's conjecture states $\dim \mathcal{Z}_{k} = d_{k}$ (see \cite{Z}).
Note that $\dim \mathcal{Z}_{k} \le d_{k}$ was proved in \cite{T} and \cite{DG}. Let $l_{k}$ be the upper bound of $\dim \mathcal{Z}_{k}$ obtained by our method, that is the number of all MZVs of weight $k$ minus the number of independent relations among MZVs obtained by our method. The result of the calculation of $l_{k}$ by using computer is as follows. Here, $2^{k-2}$ is the number of all MZVs of weight $k$.
\begin{table}[htb]
  \begin{tabular}{|c|c|c|c|c|c|c|c|c|c|c} \hline
     $k$ & 2 & 3 & 4 &5 &6 &7&8&9&10&$\cdots$\\ \hline 
    $d_{k}$ & 1 & 1 & 1 &2&2&3&4&5&7&$\cdots$\\ \hline 
    $l_{k}$ & 1 & 1 & 1 &2&2&4&4&9&9&$\cdots$\\ \hline 
    $2^{k-2}$ & 1 & 2 & 4 &8&16&32&64&128&256&$\cdots$\\ \hline
  \end{tabular}
\end{table}

We can see $d_{k}$ equals $l_{k}$ up to weight $6$. However, $d_{k}$ does not equal to $l_{k}$ when the weight is $7$. Therefore, we can not obtain all relations among MZVs by using our method.  However, by comparing $d_{k}$, $l_{k}$ and $2^{k-2}$, we can see that many relations are obtained by our method. 

 The author's program is implemented in the computer algebra system SageMath. The details of calculation algorithm are explained in Section \ref{se:algo}. Note that there is a possibility that more relations can be obtained, because the author's program is only to use relations among iterated log-sine integrals obtained by calculations of weight up to $k+1$ for obtaining relations among MZVs of weight $k$. However, the last one relation of MZVs of weight $7$ can not obtained by relations among iterated log-sine integrals obtained by calculations of weight $8$, $9$, $10$ and $11$ with the author's program.

\subsection{Alternative proofs of several known results}
In this subsection, we give alternative proofs of several known results on multiple zeta values by using our method.

The first is the duality of multiple zeta values, that is $\zeta(\mathbf{k}) = \zeta(\mathbf{k}^*)$. We can prove $\zeta(\mathbf{k})$ and $\zeta(\mathbf{k}^*)$ have the same log-sine integral expressions. It is stated that Theorem \ref{th:BBK} gives another proof of the duality in \cite{BBK}. Namely, on Theorem \ref{th:BBK},  the right hand side of $\zeta(\mathbf{k})$ is equal to the complex conjugate of the right hand side of $\zeta(\mathbf{k}^*)$. Therefore, by using Corollary \ref{th:LitoLspi/3} and Corollary \ref{pr:prodpi/3} for each, we can see that $\zeta(\mathbf{k})$ and $\zeta(\mathbf{k}^*)$ have the same log-sine integral expressions.

The second is Euler's result on $\zeta(2k)$. The following result is proven by using only our method. 
\begin{theorem}For $k \in \mathbb{N}$ and $|X| <1$, we have
\begin{align}
&\sum_{k=1}^{\infty}\Big(\pi\zeta(2k)-2i\sum_{u=1}^{k-1}\frac{1}{(2u)!}(-1)^{u}(1-2^{2u})B_{2u}\pi^{2u}\zeta(\{1\}^{2(k-u)-1},2)\label{eq:zetagf}\\
&-i\sum_{u=0}^{k-2}\frac{1}{(2u)!}(-1)^{u}(2-2^{2u})B_{2u}\pi^{2u}\sum_{v=1}^{k-u-1}\zeta(\{1\}^{2(k-v-u)-1},2v+2)\Big)X^{2k}\nonumber\\
&=\frac{\pi}{2}-\frac{\pi^{2}}{2}X\cot\left(\pi X\right)\nonumber\\
&\quad+i\pi X^{2}\csc(\pi X)\int_{0}^{\frac{\pi}{3}}\sin((\theta+\pi) X)A(\theta)\,d\theta\nonumber\\
&\quad-2i\pi X\csc(\pi X)\int_{0}^{\frac{\pi}{3}}\sinh(A(\theta)X)\sin\left(\frac{\theta+\pi}{2}X\right)\cos\left((\theta-\pi) X\right)\,d\theta,\nonumber
\end{align}
where $B_{k}$ denote the Bernoulli numbers.
In particular, we have
\[\zeta(2k)=(-1)^{k+1}\frac{(2\pi)^{2k}B_{2k}}{2(2k)!}.\]
\end{theorem}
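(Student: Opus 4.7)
The plan is to assemble the left-hand side as a single Cauchy product by substituting (\ref{eq:log-sine weight 1}) for each of the three families of multiple zeta values that appear. Writing $\zeta(2k)=\zeta(\{1\}^{a-1},b+1)$ with $(a,b)=(1,2k-1)$, $\zeta(\{1\}^{2(k-u)-1},2)$ with $(a,b)=(2(k-u),1)$, and $\zeta(\{1\}^{2(k-v-u)-1},2v+2)$ with $(a,b)=(2(k-v-u),2v+1)$, each of the nine resulting pieces (three $\zeta$-types times the three terms on the right of (\ref{eq:log-sine weight 1})) becomes a double sum in $k$ and either $u$ or $(u,v)$. The Bernoulli weights on the left recombine via the elementary generating functions
\[\sum_{u\ge 0}\frac{(-1)^u(1-2^{2u})B_{2u}}{(2u)!}(\pi X)^{2u}=\frac{\pi X}{2}\tan\!\left(\frac{\pi X}{2}\right),\qquad \sum_{u\ge 0}\frac{(-1)^u(2-2^{2u})B_{2u}}{(2u)!}(\pi X)^{2u}=\pi X\csc(\pi X),\]
which are the source of the $\csc(\pi X)$ prefactors on the right-hand side.

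Having formed the Cauchy products, I would carry out the remaining sums under the integral sign. In the $T_1$-piece of (\ref{eq:log-sine weight 1}) the powers of $\theta$ and of $-A(\theta)+i(\theta-\pi)/2$ assemble into exponentials of $\pm iX\theta$ and of $\pm X(-A(\theta)+i(\theta-\pi)/2)$ once the factorial weights are absorbed, and similarly for $T_2$. Using Euler's formulas together with $\log(1-e^{\pm i\theta})=A(\theta)\pm i(\theta-\pi)/2$, the integrands collapse into $\sin((\theta+\pi)X)A(\theta)$ and $\sinh(A(\theta)X)\sin((\theta+\pi)X/2)\cos((\theta-\pi)X)$ as in the statement. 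The external factor $\pi X\csc(\pi X)$ arises from the $(2-2^{2u})$-sum, while the identity $\tan(\pi X/2)=\csc(\pi X)-\cot(\pi X)$ lets me absorb the residual $\cot(\pi X/2)$ produced by the $(1-2^{2u})$-sum into the $\csc(\pi X)$ prefactor and into the closed term $\tfrac{\pi}{2}-\tfrac{\pi^2}{2}X\cot(\pi X)$ coming from the $T_0$-pieces.

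For the ``in particular'' assertion, take real parts of both sides. All multiple zeta values are real, so the second and third sums on the left (with prefactors $-2i$ and $-i$) are purely imaginary; hence $\Re(\mathrm{LHS})=\pi\sum_{k\ge 1}\zeta(2k)X^{2k}$. On the right, the two integral terms carry explicit factors $i$ and $-2i$ while their integrands are real on $[0,\pi/3]$, so they are purely imaginary; hence $\Re(\mathrm{RHS})=\tfrac{\pi}{2}-\tfrac{\pi^2}{2}X\cot(\pi X)$. Dividing by $\pi$ and using $\pi X\cot(\pi X)=\sum_{k\ge 0}\frac{(-1)^k(2\pi X)^{2k}B_{2k}}{(2k)!}$, comparison of coefficients of $X^{2k}$ yields Euler's formula. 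The main obstacle is the bookkeeping in the previous paragraph: keeping track of the various $\pm i$ signs so that exactly the two stated integrals emerge, and verifying that the $\cot(\pi X/2)$ residue from the $(1-2^{2u})$-sum combines with the $T_0$-contribution to produce precisely $-\tfrac{\pi^2}{2}X\cot(\pi X)$ with no spurious terms.
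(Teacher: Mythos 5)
Your proposal follows essentially the same route as the paper: substitute the depth-one log-sine expression \eqref{eq:log-sine weight 1} into each of the three families of MZVs, resum the Bernoulli weights into $\frac{\pi X}{2}\tan\!\left(\frac{\pi X}{2}\right)$ and $\pi X\csc(\pi X)$, collapse the inner sums under the integral sign into $\sinh$, $\sin$ and $\cos$ factors, and finish with $\tan\!\left(\frac{\pi X}{2}\right)-\csc(\pi X)=-\cot(\pi X)$, after which Euler's formula follows by taking real parts and comparing with the cotangent expansion. This matches the paper's proof (your passing reference to a residual ``$\cot(\pi X/2)$'' should read $\tan(\pi X/2)$, but the identity you invoke is the correct one), so the only remaining work is the sign bookkeeping you already flag.
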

\begin{proof}
Because
\begin{align}
\zeta(2k)=&\frac{(-1)^{k}}{(2k-1)!}\left(\frac{\pi}{3}\right)^{2k}+i\frac{(-1)^{k-1}}{(2k-2)!}\int_{0}^{\frac{\pi}{3}}\left(A(\theta)+i\frac{\theta-\pi}{2}\right)\theta^{2k-2}\,d\theta \nonumber \\
&-i\frac{1}{(2k-1)!}\int_{0}^{\frac{\pi}{3}}\left(A(\theta)-i\frac{\theta-\pi}{2}\right)^{2k-1}\,d\theta \nonumber
\end{align}
by (\ref{eq:log-sine weight 1}), we have
\begin{align*}
&\sum_{k=1}^{\infty}\pi\zeta(2k)X^{2k}\\
=&-\frac{\pi^2 X}{3}\sin\left(\frac{\pi X}{3}\right)+i\pi X^{2}\int_{0}^{\frac{\pi}{3}}\left(A(\theta)+i\frac{\theta-\pi}{2}\right)\cos\left(\theta X\right)\,d\theta \nonumber \\
&-i\pi X \int_{0}^{\frac{\pi}{3}}\sinh\left(\left(A(\theta)-i\frac{\theta-\pi}{2}\right)X\right)\,d\theta. \nonumber
\end{align*}
Because
\begin{align*}
&\zeta(\{1\}^{2(k-u)-1},2)\\
=&i\frac{(-1)^{k-u}}{(2(k-u))!}\left(\frac{\pi}{3}\right)^{2(k-u)+1}-\frac{i}{(2(k-u))!}\int_{0}^{\frac{\pi}{3}}\left(A(\theta)+i\frac{\theta-\pi}{2}\right)^{2(k-u)}\,d\theta \nonumber \\
&-\frac{i}{(2(k-u)-1)!}\int_{0}^{\frac{\pi}{3}}(i\theta)^{2(k-u)-1}\left(A(\theta)-i\frac{\theta-\pi}{2}\right)\,d\theta \nonumber
\end{align*}
by (\ref{eq:log-sine weight 1}), we have
\begin{align*}
&-2i\sum_{k=1}^{\infty}\sum_{u=1}^{k-1}\frac{1}{(2u)!}(-1)^{u}(1-2^{2u})B_{2u}\pi^{2u}\zeta(\{1\}^{2(k-u)-1},2)X^{2k}\\
&=\frac{\pi^{2}X}{3}\tan\left(\frac{\pi X}{2}\right)\cos\left(\frac{\pi X}{3}\right)-\frac{\pi^{2}X}{3}\tan\left(\frac{\pi X}{2}\right)\\
&-\pi X\tan\left(\frac{\pi}{2}X\right)\int_{0}^{\frac{\pi}{3}}\cosh\left(\left(A(\theta)+i\frac{\theta-\pi}{2}\right)X\right)\,d\theta+\frac{\pi^{2}X}{3}\tan\left(\frac{\pi X}{2}\right)\\
&-i\pi X^{2} \tan\left(\frac{\pi}{2}X\right)\int_{0}^{\frac{\pi}{3}}\sin\left(\theta X\right)\left(A(\theta)-i\frac{\theta-\pi}{2}\right)\,d\theta.
\end{align*}
Because
\begin{align*}
&\zeta(\{1\}^{2(k-v-u)-1},2v+2) \\
&=i\frac{(-1)^{k-u}}{(2(k-v-u))!(2v)!}\left(\frac{\pi}{3}\right)^{2(k-v-u)}\int_{0}^{\frac{\pi}{3}}\theta^{2v}\,d\theta\\
&-\frac{i}{(2(k-v-u))!(2v)!}\int_{0}^{\frac{\pi}{3}}\left(-A(\theta)-i\frac{\theta-\pi}{2}\right)^{2(k-v-u)}(-i\theta)^{2v}\,d\theta \nonumber \\
&+\frac{i}{(2(k-v-u)-1)!(2v+1)!}\int_{0}^{\frac{\pi}{3}}(i\theta)^{2(k-v-u)-1}\left(-A(\theta)+i\frac{\theta-\pi}{2}\right)^{2v+1}\,d\theta \nonumber
\end{align*}
by (\ref{eq:log-sine weight 1}), we have
\begin{align*}
&-i\sum_{k=1}^{\infty}\sum_{u=0}^{k-2}\frac{1}{(2u)!}(-1)^{u}(2-2^{2u})B_{2u}\pi^{2u}\sum_{v=1}^{k-u-1}\zeta(\{1\}^{2(k-v-u)-1},2v+2)X^{2k}\\
&=\frac{\pi X}{2}\csc\left(\pi X\right)\int_{0}^{\frac{\pi}{3}}\cos\left(\left(\theta-\frac{\pi}{3}\right)X\right)\,d\theta+\frac{\pi X}{2}\csc\left(\pi X\right)\int_{0}^{\frac{\pi}{3}}\cos\left(\left(\theta+\frac{\pi}{3}\right)X\right)\,d\theta\\
&-\frac{\pi^{2} X}{3}\csc\left(\pi X\right)\cos\left(\frac{\pi}{3}X\right)-\pi X\csc\left(\pi X\right)\int_{0}^{\frac{\pi}{3}}\cos\left(\theta X\right)\,d\theta
+\frac{\pi^{2}}{3}X\csc\left(\pi X\right)\\
&-\frac{\pi X}{2} \csc\left(\pi X\right)\int_{0}^{\frac{\pi}{3}}\cosh\left(\left(A(\theta)+i\frac{\theta-\pi}{2}+i\theta\right)X\right)\,d\theta\\
&-\frac{\pi X}{2} \csc\left(\pi X\right)\int_{0}^{\frac{\pi}{3}}\cosh\left(\left(A(\theta)+i\frac{\theta-\pi}{2}-i\theta\right)X\right)\,d\theta\\
&+\pi X\csc\left(\pi X\right)\int_{0}^{\frac{\pi}{3}}\cosh\left(\left(A(\theta)+i\frac{\theta-\pi}{2}\right)X\right)\,d\theta+\pi X\csc\left(\pi X\right)\int_{0}^{\frac{\pi}{3}}\cos\left(\theta X\right)\,d\theta\\
&-\frac{\pi^{2}X}{3}\csc\left(\pi X\right)\\
&+\frac{\pi X}{2}\csc\left(\pi X\right)\int_{0}^{\frac{\pi}{3}}\cosh\left(\left(A(\theta)-i\frac{\theta-\pi}{2}-i\theta\right)X\right)\,d\theta\\
&-\frac{\pi X}{2}\csc\left(\pi X\right)\int_{0}^{\frac{\pi}{3}}\cosh\left(\left(A(\theta)-i\frac{\theta-\pi}{2}+i\theta\right)X\right)\,d\theta\\
&+i\pi X^{2}\csc\left(\pi X\right)\int_{0}^{\frac{\pi}{3}}\sin\left(\theta X\right)\left(A(\theta)-i\frac{\theta-\pi}{2}\right)\,d\theta.
\end{align*}
Here, we note that the following formulas hold:
\begin{align*}
&\cosh\left(\left(A(\theta)+i\frac{\theta-\pi}{2}+i\theta\right)X\right)+\cosh\left(\left(A(\theta)+i\frac{\theta-\pi}{2}-i\theta\right)X\right)\\
&-\cosh\left(\left(A(\theta)-i\frac{\theta-\pi}{2}-i\theta\right)X\right)+\cosh\left(\left(A(\theta)-i\frac{\theta-\pi}{2}+i\theta\right)X\right)\\
&=2i\sinh\left(A(\theta)X\right)\sin\left(\left(\frac{\theta-\pi}{2}+\theta\right)X\right)+2\cosh\left(A(\theta)X\right)\cos\left(\left(\frac{\theta-\pi}{2}-\theta\right)X\right),
\end{align*}
\begin{align*}
&\sinh\left(\left(A(\theta)-i\frac{\theta-\pi}{2}\right)X\right)\\
&=-i\sin\left(\frac{\theta-\pi}{2}X\right)\cosh\left(A(\theta)X\right)+\cos\left(\frac{\theta-\pi}{2}X\right)\sinh\left(A(\theta)X\right),
\end{align*}
\begin{align*}
&\cosh\left(\left(A(\theta)+i\frac{\theta-\pi}{2}\right)X\right)\\
&=\cos\left(\frac{\theta-\pi}{2}X\right)\cosh\left(A(\theta)X\right)+i\sin\left(\frac{\theta-\pi}{2}X\right)\sinh\left(A(\theta)X\right)
\end{align*}
and
\begin{align*}
&\tan\left(\frac{\pi X}{2}\right)-\csc\left(\pi X\right)=-\cot\left(\pi X\right).
\end{align*}
Therefore, the left hand side of (\ref{eq:zetagf}) is equal to
\begin{align*}
&-\frac{\pi^2 X}{3}\sin\left(\frac{\pi X}{3}\right)+i\pi X^{2}\int_{0}^{\frac{\pi}{3}}\left(A(\theta)+i\frac{\theta-\pi}{2}\right)\cos\left(\theta X\right)\,d\theta \nonumber \\
&-\pi X \int_{0}^{\frac{\pi}{3}}\sin\left(\frac{\theta-\pi}{2}X\right)\cosh\left(A(\theta)X\right)\,d\theta \nonumber\\
&-i\pi X \int_{0}^{\frac{\pi}{3}}\cos\left(\frac{\theta-\pi}{2}X\right)\sinh\left(A(\theta)X\right)\,d\theta \nonumber\\
&-\frac{\pi^{2}X}{3}\cot\left(\pi X\right)\cos\left(\frac{\pi X}{3}\right)\\
&+\pi X\cot\left(\pi X\right)\int_{0}^{\frac{\pi}{3}}\cos\left(\frac{\theta-\pi}{2}X\right)\cosh\left(A(\theta)X\right)\,d\theta\\
&+i\pi X\cot\left(\pi X\right)\int_{0}^{\frac{\pi}{3}}\sin\left(\frac{\theta-\pi}{2}X\right)\sinh\left(A(\theta)X\right)\,d\theta\\
&+i\pi X^{2} \cot\left(\pi X\right)\int_{0}^{\frac{\pi}{3}}\sin\left(\theta X\right)\left(A(\theta)-i\frac{\theta-\pi}{2}\right)\,d\theta\\
&+\frac{\pi X}{2}\csc\left(\pi X\right)\int_{0}^{\frac{\pi}{3}}\cos\left(\left(\theta-\frac{\pi}{3}\right)X\right)\,d\theta+\frac{\pi X}{2}\csc\left(\pi X\right)\int_{0}^{\frac{\pi}{3}}\cos\left(\left(\theta+\frac{\pi}{3}\right)X\right)\,d\theta\\
&-i\pi X \csc\left(\pi X\right)\int_{0}^{\frac{\pi}{3}}\sinh\left(A(\theta)X\right)\sin\left(\left(\frac{\theta-\pi}{2}+\theta\right)X\right)\,d\theta\\
&-\pi X \csc\left(\pi X\right)\int_{0}^{\frac{\pi}{3}}\cosh\left(A(\theta)X\right)\cos\left(\left(\frac{\theta-\pi}{2}-\theta\right)X\right)\,d\theta.
\end{align*}
Also notice that the following formulas hold:
\begin{align*}
&-\pi X \int_{0}^{\frac{\pi}{3}}\sin\left(\frac{\theta-\pi}{2}X\right)\cosh\left(A(\theta)X\right)\,d\theta\\
&-\pi X\csc\left(\pi X\right)\int_{0}^{\frac{\pi}{3}}\cosh\left(A(\theta)X\right)\cos\left(\left(\frac{\theta-\pi}{2}-\theta\right)X\right)\,d\theta\\
&=-\pi X\cot\left(\pi X\right)\int_{0}^{\frac{\pi}{3}}\cosh\left(A(\theta)X\right)\cos\left(\frac{\theta-\pi}{2}X\right)\,d\theta
\end{align*}
and
\begin{align*}
&-\pi X^{2} \int_{0}^{\frac{\pi}{3}}\frac{\theta-\pi}{2}\cos\left(\theta X\right)\,d\theta=\frac{\pi^{2} X}{3}\sin\left(\frac{\pi}{3}X\right)-\frac{\pi}{2}\cos\left(\frac{\pi}{3}X\right)+\frac{\pi}{2},\\
&\pi X^{2} \int_{0}^{\frac{\pi}{3}}\sin\left(\theta X\right)\frac{\theta-\pi}{2}\,d\theta=\frac{\pi^{2} X}{3}\cos\left(\frac{\pi}{3}X\right)-\frac{\pi^{2}}{2}X+\frac{\pi}{2}\sin\left(\frac{\pi}{3}X\right),\\
&\frac{\pi X}{2}\int_{0}^{\frac{\pi}{3}}\cos\left(\left(\theta-\frac{\pi}{3}\right)X\right)+\cos\left(\left(\theta+\frac{\pi}{3}\right)X\right)\,d\theta=\pi\cos\left(\frac{\pi}{3}X\right)\sin\left(\frac{\pi}{3}X\right).\\
\end{align*}
Therefore, the real part of the left hand side of (\ref{eq:zetagf}) is equal to
\begin{align*}
&-\frac{\pi}{2}\cos\left(\frac{\pi}{3}X\right)+\frac{\pi}{2}-\frac{\pi^{2}}{2}X\cot\left(\pi X\right)+\frac{\pi}{2}\sin\left(\frac{\pi}{3}X\right)\cot\left(\pi X\right)\\
&+\pi\csc\left(\pi X\right)\cos\left(\frac{\pi}{3}X\right)\sin\left(\frac{\pi}{3}X\right)\\
&=\frac{\pi}{2}-\frac{\pi^{2}}{2}X\cot\left(\pi X\right).
\end{align*}
Moreover, the following formulas hold:
\begin{align*}
&-i\pi X \int_{0}^{\frac{\pi}{3}}\cos\left(\frac{\theta-\pi}{2}X\right)\sinh\left(A(\theta)X\right)\,d\theta \nonumber\\
&+i\pi X\cot\left(\pi X\right)\int_{0}^{\frac{\pi}{3}}\sin\left(\frac{\theta-\pi}{2}X\right)\sinh\left(A(\theta)X\right)\,d\theta\\
&=i\pi X \csc\left(\pi X\right)\int_{0}^{\frac{\pi}{3}}\sinh\left(A(\theta)X\right)\sin\left(\left(\frac{\theta-\pi}{2}-\pi\right)X\right)\,d\theta
\end{align*}
and
\begin{align*}
&i\pi X^{2}\int_{0}^{\frac{\pi}{3}}A(\theta)\cos\left(\theta X\right)\,d\theta+i\pi X^{2} \cot\left(\pi X\right)\int_{0}^{\frac{\pi}{3}}\sin\left(\theta X\right)A(\theta)\,d\theta\\
&=i\pi X^{2}\csc\left(\pi X\right)\int_{0}^{\frac{\pi}{3}}A(\theta)\cos\left(\left(\theta+\pi\right) X\right)\,d\theta.
\end{align*}
Therefore, the imaginary part of the left hand side of (\ref{eq:zetagf}) is equal to
\begin{align*}
&i\pi X^{2}\csc\left(\pi X\right)\int_{0}^{\frac{\pi}{3}}A(\theta)\cos\left(\left(\theta+\pi\right) X\right)\,d\theta\\
&i\pi X \csc\left(\pi X\right)\int_{0}^{\frac{\pi}{3}}\sinh\left(A(\theta)X\right)\sin\left(\left(\frac{\theta-\pi}{2}-\pi\right)X\right)\,d\theta\\
&-i\pi X \csc\left(\pi X\right)\int_{0}^{\frac{\pi}{3}}\sinh\left(A(\theta)X\right)\sin\left(\left(\frac{\theta-\pi}{2}+\theta\right)X\right)\,d\theta.\\
\end{align*}
\end{proof}
As mentioned in the previous subsection, there are relations among MZVs not obtained by our method.
On our method, if all relations among iterated log-sine integrals are proved, then we can obtain all relations among MZVs in principle. Therefore, we are interested in the relations among iterated log-sine integrals which are not obtained by our method.

There is the following formula proved in \cite{L}:
\begin{align}\Re(\mathrm{Li}_{2k+1}(e^{\frac{\pi}{3}i})) &= \frac{1}{2}(1-2^{-2k})(1-3^{-2k})\zeta(2k+1).\label{eq:cr}
\end{align}
By applying Theorem \ref{th:BBK}, Corollary \ref{th:LitoLspi/3} and Corollary \ref{pr:prodpi/3} to the both sides, we obtain relations among iterated log-sine integrals, which could not be obtained by our method with the author's program when $k = 2, 3, 4$. However, even if we use this relation in addition to relations among iterated log-sine integrals obtained by the calculations of weight up to $2k+2$, the value of $l_{2k+1}$ does not change up to $k=4$.

However, by adding relations among log-sine integrals obtained by (\ref{eq:cr}) to our method, we can give another proof of the following formula.
\begin{theorem}[{Choi, Cho, and Srivistava \cite[(4.14)]{CCS}(Lewin  \cite[(7.160)]{L})}]
For $m \in \mathbb{Z}_{\ge 0}$, we have
\begin{align*}
&(-1)^{m}\int_{0}^{\frac{\pi}{3}}\left(\theta-\frac{\pi}{3}\right)^{2m+1}A(\theta)\,d\theta\\
&=-\frac{1}{2}(2m+1)!(1-2^{-2m-2})(1-3^{-2m-2})\zeta(2m+3)\\
&\quad+(2m+1)!\sum_{k=0}^{m}(-1)^{k}\left(\frac{\pi}{3}\right)^{2k}\frac{\zeta(2m+3-2k)}{(2k)!}.
\end{align*}
\end{theorem}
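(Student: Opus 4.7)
My plan is to evaluate $I_m:=\int_0^{\pi/3}(\theta-\pi/3)^{2m+1}A(\theta)\,d\theta$ via the Fourier expansion $A(\theta)=-\sum_{n\geq 1}\cos(n\theta)/n$, reducing the problem to a single application of Lewin's identity (\ref{eq:cr}).

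After substituting the Fourier series, I obtain
\[I_m = -\sum_{n\ge 1}\frac{1}{n}\int_0^{\pi/3}(\theta-\pi/3)^{2m+1}\cos(n\theta)\,d\theta.\]
To legitimize this interchange, which involves a conditionally convergent series, I would first integrate by parts once to replace $A$ by $-\mathrm{Cl}_2$, whose Fourier series converges absolutely: the boundary terms vanish because $(\theta-\pi/3)^{2m+1}$ vanishes at $\theta=\pi/3$ and $\mathrm{Cl}_2$ vanishes at $\theta=0$.

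Next, I would compute $K^{(n)}_{2m+1}:=\int_0^{\pi/3}(\theta-\pi/3)^{2m+1}\cos(n\theta)\,d\theta$ by iterated integration by parts. In each two-step IBP the factor $\sin(n\theta)$ (vanishing at $0$) kills its boundary contribution against $(\theta-\pi/3)^{\text{odd}}$ (vanishing at $\pi/3$), while the subsequent $-\cos(n\theta)/n$ step picks up only the value at $\theta=0$, producing a power of $\pi/3$. This gives the recursion
\[K^{(n)}_{2m+1} = -\frac{(2m+1)(\pi/3)^{2m}}{n^2} - \frac{(2m+1)(2m)}{n^2}K^{(n)}_{2m-1},\qquad K^{(n)}_1 = \frac{\cos(n\pi/3)-1}{n^2},\]
which I would unwind to the closed form
\[K^{(n)}_{2m+1} = (2m+1)!\sum_{k=0}^m\frac{(-1)^{k+1}(\pi/3)^{2m-2k}}{(2m-2k)!\,n^{2k+2}} + \frac{(-1)^m(2m+1)!\cos(n\pi/3)}{n^{2m+2}},\]
absorbing the constant $-1$ from the base case into the sum as its $k=m$ term.

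Summing $-K^{(n)}_{2m+1}/n$ over $n$ then separates two kinds of contributions: the polynomial part produces $\sum_n n^{-(2k+3)}=\zeta(2k+3)$, while the $\cos(n\pi/3)$ part produces $\Re\mathrm{Li}_{2m+3}(e^{\pi i/3})$. Applying (\ref{eq:cr}) to replace the latter by $\tfrac12(1-2^{-2m-2})(1-3^{-2m-2})\zeta(2m+3)$, multiplying by $(-1)^m$, and reindexing $j=m-k$ yields exactly the formula claimed. The only real obstacle is bookkeeping: tracking the $(-1)^m$ and $(-1)^k$ factors through the reindex and verifying the consolidation of the base-case constant into the compact form above, since the body of the argument is mechanical.
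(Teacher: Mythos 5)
Your proof is correct, but it takes a genuinely different route from the paper's. The paper stays inside its own machinery: it starts from the log-sine expression \eqref{eq:log-sine weight 1} of $\zeta(2m+3-k)$, forms the weighted sum $\sum_{k=0}^{2m+1}\frac{1}{k!}\left(\frac{\pi}{3}i\right)^{k}\zeta(2m+3-k)$ so that the binomial theorem collapses the integrands into $\left(\frac{\pi}{3}i-i\theta\right)^{2m+1}$ and $\left(-A(\theta)+i\frac{\theta-\pi}{2}+\frac{\pi}{3}i\right)^{2m+2}$, identifies the second integral as $\mathrm{Li}_{2m+3}(e^{\pi i/3})$ via Corollary \ref{th:LitoLspi/3}, and takes real parts (only even $k$ survive) before invoking \eqref{eq:cr}. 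You instead expand $A(\theta)=-\sum_{n\ge1}\cos(n\theta)/n$, justify the interchange by passing to $\mathrm{Cl}_2$, and evaluate $\int_0^{\pi/3}(\theta-\pi/3)^{2m+1}\cos(n\theta)\,d\theta$ by an elementary two-step integration-by-parts recursion; the resulting $n^{-(2k+3)}$ and $\cos(n\pi/3)n^{-(2m+3)}$ sums give the $\zeta$-values and $\Re\,\mathrm{Li}_{2m+3}(e^{\pi i/3})$ directly, and \eqref{eq:cr} finishes as in the paper. I checked your base case, recursion, closed form, and the reindexing $j=m-k$; they all work, and the $\mathrm{Cl}_2$ device does legitimize the term-by-term integration. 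What your argument buys is self-containedness and transparency --- it is essentially the classical Fourier-series evaluation in the spirit of Lewin and needs none of Theorem \ref{th:LitoLs} or Proposition \eqref{eq:log-sine weight 1}; what the paper's argument buys is a demonstration that the identity falls out of its general iterated-log-sine framework with almost no integral computation, the binomial collapse doing the work your IBP recursion does.
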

\begin{proof}
By (\ref{eq:log-sine weight 1}), we have
\begin{align*}
&\zeta(2m+3-k)\\
&=-\frac{1}{(2m+2-k)!}\left(\frac{\pi}{3}i\right)\left(-\frac{\pi}{3}i\right)^{2m+2-k}\\
&\quad-\frac{i}{(2m+1-k)!}\int_{0}^{\frac{\pi}{3}}\left(-A(\theta)-i\frac{\theta-\pi}{2}\right)(-i\theta)^{2m+1-k}\,d\theta \nonumber \\
&\quad+\frac{i}{(2m+2-k)!}\int_{0}^{\frac{\pi}{3}}\left(-A(\theta)+i\frac{\theta-\pi}{2}\right)^{2m+2-k}\,d\theta. \nonumber
\end{align*}
Therefore we have
\begin{align*}
&\sum_{k=0}^{2m+1}\frac{1}{k!}\left(\frac{\pi}{3}i\right)^{k}\zeta(2m+3-k)\\
&=-\frac{i}{(2m+1)!}\int_{0}^{\frac{\pi}{3}}\left(-A(\theta)-i\frac{\theta-\pi}{2}\right)\left(\frac{\pi}{3}i-i\theta\right)^{2m+1}\,d\theta \nonumber \\
&\quad+\frac{i}{(2m+2)!}\int_{0}^{\frac{\pi}{3}}\left(-A(\theta)+i\frac{\theta-\pi}{2}+\frac{\pi}{3}i\right)^{2m+2}\,d\theta. \nonumber
\end{align*}
By applying Corollary \ref{th:LitoLspi/3} to the second term of the right hand side and taking the real part of it, we obtain
\begin{align*}
&\sum_{k=0}^{m}\frac{(-1)^{k}}{(2k)!}\left(\frac{\pi}{3}\right)^{2k}\zeta(2m+3-2k)\\
&=\frac{(-1)^{m}}{(2m+1)!}\int_{0}^{\frac{\pi}{3}}A(\theta)\left(\theta-\frac{\pi}{3}\right)^{2m+1}\,d\theta+\Re\left(\mathrm{Li}_{2m+3}(e^{\frac{\pi}{3}i})\right).
\end{align*}
Finally, By applying (\ref{eq:cr}), we obtain the desired formula.
\end{proof}
\section{the algorithm of calculating $l_{k}$}\label{se:algo}
In this section, we explain the algorithm of calculating $l_{k}$. We use the example of the case $k=5$ for explanation. All calculations in this section are performed by using computer. First, we make a vertical vector consisting of all MZVs of weight $k$ (but when $\zeta(\mathbf{k})$ is included as an element of the vector, then we omit  $\zeta(\mathbf{k}^*)$). Next, we apply Theorem \ref{th:BBK}, Corollary \ref{th:LitoLspi/3} and Corollary \ref{pr:prodpi/3} to each entries, and write each entries as a $\mathbb{Q}$-linear combination of the products of $\pi^m\ (m \ge 0)$ and an iterated log-sine integral at $\pi/3$ satisfying $k_{j} -1 - l_{j} > 0$ for all $j \in \{1,\dots,n\}$, and taking those real parts. For the case $k=5$, we have \renewcommand{\arraystretch}{1.2}
\begin{align}
\left(\begin{array}{r}
\zeta\left(5\right) \\
\zeta\left(1, 4\right) \\
\zeta\left(2, 3\right) \\
\zeta\left(3, 2\right)
\end{array}\right)=\left(\begin{array}{r@{\kern0.5em}r@{\kern0.5em}r@{\kern0.5em}r@{\kern0.5em}r@{\kern0.5em}
r@{\kern0.5em}r@{\kern0.5em}r}
0 & 0 & -\frac{1}{12} & \frac{3}{16} & \frac{1}{12} & -\frac{1}{16} & \frac{1}{16} & -\frac{1}{48} \\
0 & 0 & -\frac{1}{6} & \frac{3}{8} & 0 & -\frac{1}{2} & \frac{1}{8} & 0 \\
0 & \frac{1}{2} & \frac{1}{2} & -\frac{7}{8} & 0 & \frac{5}{4} & -\frac{5}{12} & \frac{7}{216} \\
0 & -\frac{3}{2} & -\frac{1}{2} & \frac{3}{8} & 0 & -\frac{3}{4} & \frac{1}{4} & -\frac{1}{72}
\end{array}\right)\left(\begin{array}{r}
{\rm Ls}_{3 , 2}^{(0 , 0)}\left(\frac{\pi}{3}\right) \\
{\rm Ls}_{2 , 3}^{(0 , 0)}\left(\frac{\pi}{3}\right) \\
{\rm Ls}_{5}^{(1)}\left(\frac{\pi}{3}\right) \\
{\rm Ls}_{5}^{(3)}\left(\frac{\pi}{3}\right) \\
\pi {\rm Ls}_{4}^{(0)}\left(\frac{\pi}{3}\right) \\
\pi {\rm Ls}_{4}^{(2)}\left(\frac{\pi}{3}\right) \\
\pi^{2} {\rm Ls}_{3}^{(1)}\left(\frac{\pi}{3}\right) \\
\pi^{3} {\rm Ls}_{2}^{(0)}\left(\frac{\pi}{3}\right)
\end{array}\right)\label{eq:matrix5}.
\end{align}

Here, the vertical vector of the right hand side is composed of all $\pi^{m}{\rm Ls}_{\mathbf{k}}^{\mathbf{l}}\left(\frac{\pi}{3}\right)$ satisfying $m+|\mathbf{k}|=k$ and $\sum_{u=1}^{n}k_{u}-1-l_{u}\equiv1\ {\rm mod}\ 2$.

On the other hand, we make a vertical vector consisting of all MZVs of weight $k+1$ except when it is self-dual (but when $\zeta(\mathbf{k})$ is included as an element of the vector, then we omit  $\zeta(\mathbf{k}^*)$), and apply Theorem \ref{th:BBK}, Corollary \ref{th:LitoLspi/3} and Corollary \ref{pr:prodpi/3} to each entries, and write each entries as a $\mathbb{Q}$-linear combination of the products of $\pi^m\ (m \ge 0)$ and an iterated log-sine integral at $\pi/3$ satisfying $k_{j} -1 - l_{j} > 0$ for all $j \in \{1,\dots,n\}$, and taking those imaginary parts. Namely,
\begin{align}
\left(\begin{array}{r}
0 \\
0 \\
0 \\
0 \\
0 \\
0
\end{array}\right)=\left(\begin{array}{r}
\Im(\zeta\left(6\right)) \\
\Im(\zeta\left(1, 5\right)) \\
\Im(\zeta\left(2, 4\right)) \\
\Im(\zeta\left(3, 3\right)) \\
\Im(\zeta\left(4, 2\right)) \\
\Im(\zeta\left(1, 3, 2\right))
\end{array}\right) =A\left(\begin{array}{r}
{\rm Ls}_{2 , 2 , 2}^{(0 , 0 , 0)}\left(\frac{\pi}{3}\right) \\
{\rm Ls}_{4 , 2}^{(1 , 0)}\left(\frac{\pi}{3}\right) \\
{\rm Ls}_{3 , 3}^{(0 , 1)}\left(\frac{\pi}{3}\right) \\
{\rm Ls}_{3 , 3}^{(1 , 0)}\left(\frac{\pi}{3}\right) \\
{\rm Ls}_{2 , 4}^{(0 , 1)}\left(\frac{\pi}{3}\right) \\
{\rm Ls}_{6}^{(0)}\left(\frac{\pi}{3}\right) \\
{\rm Ls}_{6}^{(2)}\left(\frac{\pi}{3}\right) \\
{\rm Ls}_{6}^{(4)}\left(\frac{\pi}{3}\right) \\
\pi {\rm Ls}_{3 , 2}^{(0 , 0)}\left(\frac{\pi}{3}\right) \\
\pi {\rm Ls}_{2 , 3}^{(0 , 0)}\left(\frac{\pi}{3}\right) \\
\pi {\rm Ls}_{5}^{(1)}\left(\frac{\pi}{3}\right) \\
\pi {\rm Ls}_{5}^{(3)}\left(\frac{\pi}{3}\right) \\
\pi^{2} {\rm Ls}_{4}^{(0)}\left(\frac{\pi}{3}\right) \\
\pi^{2} {\rm Ls}_{4}^{(2)}\left(\frac{\pi}{3}\right) \\
\pi^{3} {\rm Ls}_{3}^{(1)}\left(\frac{\pi}{3}\right) \\
\pi^{4} {\rm Ls}_{2}^{(0)}\left(\frac{\pi}{3}\right)
\end{array}\right),\label{eq:matrix6}
\end{align}
where $A$ represents a matrix 
\begin{align*}
\left(\begin{array}{@{\kern0.3em}r@{\kern0.5em}r@{\kern0.5em}r@{\kern0.5em}r@{\kern0.5em}r@{\kern0.5em}r
@{\kern0.5em}r@{\kern0.5em}r@{\kern0.5em}r@{\kern0.5em}r@{\kern0.5em}
r@{\kern0.5em}r@{\kern0.5em}r@{\kern0.5em}r@{\kern0.5em}r@{\kern0.5em}r}
0 & 0 & 0 & 0 & 0 & \frac{1}{120} & -\frac{1}{48} & -\frac{5}{128} & 0 & 0 & \frac{1}{24} & -\frac{1}{96} & -\frac{1}{48} & \frac{1}{64} & -\frac{1}{96} & \frac{1}{384} \\
0 & 0 & 0 & 0 & 0 & 0 & -\frac{1}{12} & -\frac{1}{16} & 0 & 0 & \frac{1}{12} & \frac{1}{48} & 0 & \frac{1}{16} & -\frac{1}{48} & 0 \\
0 & 0 & 0 & 0 & \frac{1}{4} & 0 & \frac{7}{24} & \frac{11}{64} & 0 & -\frac{1}{4} & -\frac{1}{4} & -\frac{3}{16} & 0 & -\frac{1}{24} & \frac{1}{24} & -\frac{23}{5184} \\
0 & 0 & \frac{3}{4} & \frac{3}{4} & -\frac{3}{4} & 0 & -\frac{3}{8} & -\frac{9}{64} & -\frac{1}{4} & \frac{1}{2} & \frac{1}{4} & \frac{7}{16} & 0 & -\frac{1}{4} & \frac{7}{144} & -\frac{5}{1728} \\
0 & 0 & -\frac{3}{2} & -\frac{3}{2} & \frac{1}{2} & 0 & \frac{1}{4} & \frac{3}{32} & \frac{1}{2} & 0 & -\frac{1}{6} & -\frac{7}{24} & \frac{1}{36} & \frac{11}{48} & -\frac{1}{18} & \frac{7}{2592} \\
0 & 0 & 0 & 0 & -1 & 0 & -\frac{1}{4} & 0 & 0 & 0 & 0 & \frac{5}{24} & 0 & -\frac{13}{48} & \frac{1}{12} & -\frac{7}{1296}
\end{array}\right)\!.
\end{align*}
The vertical vector of the right hand side of (\ref{eq:matrix6}) is composed of all $\pi^{m}{\rm Ls}_{\mathbf{k}}^{\mathbf{l}}\left(\frac{\pi}{3}\right)$ satisfying $m+|\mathbf{k}|=k+1$ and $\sum_{u=1}^{n}k_{u}-1-l_{u}\equiv1\ {\rm mod}\ 2$.
 The row echelon form of matrix $A$ is
\[\left(\begin{array}{rrrrrrrrrrrrrrrr}
0 & 0 & 1 & 1 & 0 & 0 & 0 & 0 & -\frac{1}{3} & 0 & \frac{1}{36} & \frac{5}{48} & 0 & -\frac{1}{12} & \frac{25}{432} & -\frac{1}{72} \\
0 & 0 & 0 & 0 & 1 & 0 & 0 & -\frac{3}{16} & 0 & 0 & \frac{1}{4} & -\frac{7}{48} & 0 & \frac{11}{24} & -\frac{7}{48} & \frac{7}{1296} \\
0 & 0 & 0 & 0 & 0 & 1 & 0 & -\frac{45}{16} & 0 & 0 & \frac{5}{2} & -\frac{15}{8} & 0 & \frac{45}{8} & \frac{25}{8} & -\frac{25}{16} \\
0 & 0 & 0 & 0 & 0 & 0 & 1 & \frac{3}{4} & 0 & 0 & -1 & -\frac{1}{4} & 0 & -\frac{3}{4} & \frac{1}{4} & 0 \\
0 & 0 & 0 & 0 & 0 & 0 & 0 & 0 & 0 & 1 & \frac{1}{12} & \frac{5}{16} & 0 & -\frac{1}{4} & -\frac{1}{48} & \frac{5}{216} \\
0 & 0 & 0 & 0 & 0 & 0 & 0 & 0 & 0 & 0 & 0 & 0 & 1 & \frac{9}{4} & \frac{3}{2} & -\frac{3}{4}
\end{array}\right).\]
Because the eight entries of the vertical vector of the right hand side of (\ref{eq:matrix6}) from the bottom are multiples of $\pi$, we focus on rows of the row echelon form of matrix $A$ whose leading coefficient is within the eighth from the right. Then we have
\begin{align}\left(\begin{array}{r}
0 \\
0 
\end{array}\right)=\left(\begin{array}{rrrrrrrr}
0 & 1 & \frac{1}{12} & \frac{5}{16} & 0 & -\frac{1}{4} & -\frac{1}{48} & \frac{5}{216} \\
0 & 0 & 0 & 0 & 1 & \frac{9}{4} & \frac{3}{2} & -\frac{3}{4}
\end{array}\right)\left(\begin{array}{r}
{\rm Ls}_{3 , 2}^{(0 , 0)}\left(\frac{\pi}{3}\right) \\
{\rm Ls}_{2 , 3}^{(0 , 0)}\left(\frac{\pi}{3}\right) \\
{\rm Ls}_{5}^{(1)}\left(\frac{\pi}{3}\right) \\
{\rm Ls}_{5}^{(3)}\left(\frac{\pi}{3}\right) \\
\pi {\rm Ls}_{4}^{(0)}\left(\frac{\pi}{3}\right) \\
\pi {\rm Ls}_{4}^{(2)}\left(\frac{\pi}{3}\right) \\
\pi^{2} {\rm Ls}_{3}^{(1)}\left(\frac{\pi}{3}\right) \\
\pi^{3} {\rm Ls}_{2}^{(0)}\left(\frac{\pi}{3}\right)
\end{array}\right).\label{eq:im5}
\end{align}

We also calculate the imaginary parts of MZVs of weight $k-1-2m$ $(0 \le m \le (k-4)/2)$. On our example, We only need to calculate when the weight is $4$. By calculating the imaginary parts of all MZVs of weight $4$ except when it is self-dual (but when $\zeta(\mathbf{k})$ is included as an element of the vector, then we omit  $\zeta(\mathbf{k}^*)$), we have
\begin{align*}\left(\begin{array}{r}
0 
\end{array}\right)
=\left(\begin{array}{r}
\Im(\zeta\left(4\right))
\end{array}\right)
=\left(\begin{array}{rrrr}
\frac{1}{6} & \frac{3}{8} & \frac{1}{4} & -\frac{1}{8}
\end{array}\right)
\left(\begin{array}{r}
{\rm Ls}_{4}^{(0)}\left(\frac{\pi}{3}\right) \\
{\rm Ls}_{4}^{(2)}\left(\frac{\pi}{3}\right) \\
\pi{\rm Ls}_{3}^{(1)}\left(\frac{\pi}{3}\right) \\
\pi^{2}{\rm Ls}_{2}^{(0)}\left(\frac{\pi}{3}\right)
\end{array}\right).
\end{align*}
Next, we multiply relations obtained by calculating the imaginary parts of MZVs of weight $k-1-2m$ by $\pi^{1+2m}$. On our example, we have 
\begin{align}\left(\begin{array}{r}
0 
\end{array}\right)
=\left(\begin{array}{rrrr}
\frac{1}{6} & \frac{3}{8} & \frac{1}{4} & -\frac{1}{8}
\end{array}\right)
\left(\begin{array}{r}
\pi{\rm Ls}_{4}^{(0)}\left(\frac{\pi}{3}\right) \\
\pi{\rm Ls}_{4}^{(2)}\left(\frac{\pi}{3}\right) \\
\pi^{2}{\rm Ls}_{3}^{(1)}\left(\frac{\pi}{3}\right) \\
\pi^{3}{\rm Ls}_{2}^{(0)}\left(\frac{\pi}{3}\right)
\end{array}\right).\label{eq:im4}
\end{align}

By applying (\ref{eq:im5}) and (\ref{eq:im4}) to (\ref{eq:matrix5}), we obtain
\begin{align}
\left(\begin{array}{r}
\zeta\left(5\right) \\
\zeta\left(1, 4\right) \\
\zeta\left(2, 3\right) \\
\zeta\left(3, 2\right)
\end{array}\right)
=\left(\begin{array}{r@{\kern0.5em}r@{\kern0.5em}r@{\kern0.5em}r@{\kern0.5em}r@{\kern0.5em}
r@{\kern0.5em}r@{\kern0.5em}r}
0 & 0 & -\frac{1}{12} & \frac{3}{16} & 0 & -\frac{1}{4} & -\frac{1}{16} & \frac{1}{24} \\
0 & 0 & -\frac{1}{6} & \frac{3}{8} & 0 & -\frac{1}{2} & \frac{1}{8} & 0 \\
0 & 0 & \frac{11}{24} & -\frac{33}{32} & 0 & \frac{11}{8} & -\frac{13}{32} & \frac{1}{48} \\
0 & 0 & -\frac{3}{8} & \frac{27}{32} & 0 & -\frac{9}{8} & \frac{7}{32} & \frac{1}{48}
\end{array}\right)\left(\begin{array}{r}
{\rm Ls}_{3 , 2}^{(0 , 0)}\left(\frac{\pi}{3}\right) \\
{\rm Ls}_{2 , 3}^{(0 , 0)}\left(\frac{\pi}{3}\right) \\
{\rm Ls}_{5}^{(1)}\left(\frac{\pi}{3}\right) \\
{\rm Ls}_{5}^{(3)}\left(\frac{\pi}{3}\right) \\
\pi {\rm Ls}_{4}^{(0)}\left(\frac{\pi}{3}\right) \\
\pi {\rm Ls}_{4}^{(2)}\left(\frac{\pi}{3}\right) \\
\pi^{2} {\rm Ls}_{3}^{(1)}\left(\frac{\pi}{3}\right) \\
\pi^{3} {\rm Ls}_{2}^{(0)}\left(\frac{\pi}{3}\right)
\end{array}\right)\label{eq:nmatrix5}.
\end{align}
Here, $l_{k}$ is given as the rank of the first matrix of the right hand side of (\ref{eq:nmatrix5}).

Moreover, the row echelon form of 
\begin{align*}
\left(\begin{array}{rrrrrrrr|r}
0 & 0 & -\frac{1}{12} & \frac{3}{16} & 0 & -\frac{1}{4} & -\frac{1}{16} & \frac{1}{24} & \zeta(5)\\
0 & 0 & -\frac{1}{6} & \frac{3}{8} & 0 & -\frac{1}{2} & \frac{1}{8} & 0 & \zeta(1,4)\\
0 & 0 & \frac{11}{24} & -\frac{33}{32} & 0 & \frac{11}{8} & -\frac{13}{32} & \frac{1}{48} & \zeta(2,3)\\
0 & 0 & -\frac{3}{8} & \frac{27}{32} & 0 & -\frac{9}{8} & \frac{7}{32} & \frac{1}{48}& \zeta(3,2)
\end{array}\right)
\end{align*}
is
\[\left(\begin{array}{rrrrrrrr|r}
0 & 0 & 1 & -\frac{9}{4} & 0 & 3 & 0 & -\frac{1}{4} & -6 \, \zeta\left(5\right) - 3 \, \zeta\left(1, 4\right) \\
0 & 0 & 0 & 0 & 0 & 0 & 1 & -\frac{1}{3} & -8 \, \zeta\left(5\right) + 4 \, \zeta\left(1, 4\right) \\
0 & 0 & 0 & 0 & 0 & 0 & 0 & 0 & -\frac{1}{2} \, \zeta\left(5\right) + \zeta\left(2, 3\right) + 3 \, \zeta\left(1, 4\right) \\
0 & 0 & 0 & 0 & 0 & 0 & 0 & 0 & -\frac{1}{2} \, \zeta\left(5\right) + \zeta\left(3, 2\right) - 2 \, \zeta\left(1, 4\right)
\end{array}\right).\]
Threrfore, we obtain independent relations $0=-\frac{1}{2} \, \zeta\left(5\right) + \zeta\left(2, 3\right) + 3 \, \zeta\left(1, 4\right)$ and $0=-\frac{1}{2} \, \zeta\left(5\right) + \zeta\left(3, 2\right) - 2 \, \zeta\left(1, 4\right)$.
\section*{Acknowledgment}
The author is deeply grateful to Prof. Kohji Matsumoto for helpful comments. He is also deeply grateful to Prof. Iwao Kimura for some advice on computer.
\vspace{1mm}

\vspace{4mm}

{\footnotesize
{\sc
\noindent
Graduate School of Mathematics, Nagoya University,\\
Chikusa-ku, Nagoya 464-8602, Japan.
}\\
{\it E-mail address}, R. Umezawa\hspace{1.75mm}: {\tt
m15016w@math.nagoya-u.ac.jp}\\
}

\end{document}